\newtheorem{theorem}{Theorem}
\newtheorem{corollary}{Corollary}
\newtheorem{definition}[theorem]{Definition}
\newenvironment{proof}[1][Proof]{\textbf{#1.} }{\ \rule{0.5em}{0.5em}}
\numberwithin{equation}{section} \numberwithin{theorem}{section}
\numberwithin{corollary}{section} \numberwithin{lemma}{section}
\numberwithin{remark}{section} \numberwithin{notation}{section}
\begin{document}
\title{Consimilarity of Split Quaternion Matrices and a Solution of the Split Quaternion Matrix Equation $X - A\widetilde XB = C$}
\author{Hidayet H\"{u}da K\"{o}sal, Mahmut Akyi\v{g}it, Murat Tosun \\
Department of Mathematics, Sakarya University, Sakarya, Turkey \\}

\maketitle

\begin{abstract}
In this paper, the consimilarity of complex matrices is generalized for the split quaternions. In this regard, coneigenvalue and coneigenvector  are defined for split quaternion matrices. Also, the existence of solution to the split quaternion matrix equation $X - A\widetilde XB = C$ is characterized and the solution of the equation in the explicit form are derived via its real representation.

\textbf{Mathematics Subject Classification (2010)}: 15B33; 15A18.

\textbf{Keywords}:Split quaternion, split quaternion matrix, consimilarity, coneigenvalue.\\
\end{abstract}

\section{Introduction}

\noindent

Hamilton introduced real quaternions that can be represented as \cite{1}

\begin{equation}
\mathbb{H} = \left\{ {q = {q_0} + {q_1}i + {q_2}j + {q_3}k:\,\,{q_s} \in R}, s=0,1,2,3 \right\}
\end{equation}

\noindent where
\begin{equation}
{i^2} = {j^2} = {k^2} =  - 1,\,\,ij = - ji =k  ,\,\,jk = - kj =i  ,\,\,ki = - ik =j  .
\end{equation}

\noindent  It seems forthwith  that multiplication of the real quaternions is not commutative owing to these ruled. So, it is not easy to work the real quaternions algebra problems. Similarly, it is well known that the main obstacle in study of the real quaternions matrices, dating back 1936 \cite{2}, is the non-commutative multiplication of the real quaternions. There are many studies on matrices of the real quaternions. So, Baker discussed right eigenvalues of the real quaternion matrices with a topological approach in \cite{3}. On the other hand, Huang and So introduced on left eigenvalues of the real quaternion matrices \cite{4}. After that Huang discussed consimilarity of the real quaternion matrices and obtained the Jordan canonical form of the real quaternion matrices down below consimilarity \cite{5}. Jiang and Ling studied in \cite{6} the problem of condiagonalization of the real quaternion matrices under consimilarity and gave two algebraic methods for the condiagonalization.
Also,the existence of the solution to the real quaternion matrix equation $X - A\widetilde XB = C,$ were characterized and the solution of the equation in closed-form are derived via real representation of the real quaternion matrices, \cite{7}.

\noindent After Hamilton had discovered the real quaternions, James Cockle defined, by using real quaternions, the set of split quaternions, in 1849. The split quaternions are not commutative like real quaternions. But the set of split quaternions contains zero divisors, nilpotent and nontrivial idempotent elements \cite{8}.
The split quaternions are a recently developing topic, since the split quaternions are used to express Lorentzian relations. Also, there are many studies on geometric and physical meaning of the split quaternions \cite{8}-\cite{9}-\cite{14}. Alagoz \emph{et al.} considered split quaternion matrices. They investigated the split quaternions matrices using properties of complex matrices \cite{12}.  After that Erdogdu and Ozdemir obtained method of finding eigenvalues of the split quaternions matrices. Also, they gave an extension of Gershgorin theorem for the split quaternion matrices in \cite{10}.

\section{Consimilarity of Split Quaternions}

\noindent
Let $\mathbb{R}$, $\mathbb{C }= \mathbb{R} \oplus \mathbb{R}i$ and   ${\mathbb{H}_S} = \mathbb{R} \oplus \mathbb{R}i \oplus \mathbb{R}j \oplus \mathbb{R}k$ be the real number, complex number and split quaternion field over the $\mathbb{R}$, respectively, where

\begin{equation}
\begin{array}{l}
\,\,\,\,\,\,\,\,\,\,\,\,\,\,\,\,\,\,\,\,\,\,{i^2} =  - 1,\,\,\,\,\,\,{j^2} = {k^2} = 1\\
ij =  - ji = k,\,\,\,\,jk =  - kj =  - i,\,\,\,ki =  - ik = j.
\end{array}
\end{equation}

\noindent Let $q = {q_0} + {q_1}i + {q_2}j + {q_3}k \in {\mathbb{H}_S}.$  The conjugate and norm of split quaternion are described as, respectively,

\begin{equation}
\overline q  = {q_0} - {q_1}i - {q_2}j - {q_3}k
\end{equation}
and

\begin{equation}
\left\| q \right\| = \sqrt {\left| {q\overline q } \right|}  = \sqrt {\left| {{I_q}} \right|}
\end{equation}

\noindent where ${{I}_{q}}=q_{0}^{2}+q_{1}^{2}-q_{2}^{2}-q_{3}^{2}.$

\noindent Also, $q \in {\mathbb{H}_S}$  is said to be timelike, spacelike or null if ${{I}_{q}}>0,\,\,{{I}_{q}}<0$ or ${{I}_{q}}=0$, respectively.

\noindent The linear transformations $R ,L:{{\mathbb{H}}_{S}}\to End\left( {{\mathbb{H}}_{S}} \right),$ given by

\begin{equation}
R \left( q \right):{{\mathbb{H}}_{S}}\to {{\mathbb{H}}_{S}},\,\,\,\,R \left( q \right)\left( x \right)=xq,
\end{equation}

\noindent and

\begin{equation}
L \left( q \right):{\mathbb{H}_S} \to {\mathbb{H}_S},\,\,\,\,L \left( q \right)\left( x \right) = qx
\end{equation}

\noindent are defined the right and left representation of the algebra ${{\mathbb{H}}_{S}}.$

\noindent For the split quaternion $q={{q}_{0}}+{{q}_{1}}i+{{q}_{2}}j+{{q}_{3}}k\in {{\mathbb{H}}_{S}}$, the mapping:

\begin{equation}
L :{\mathbb{H}_S} \to {M_4}\left( \mathbb{R} \right),\,\,L \left( q \right) = \left( {\begin{array}{*{20}{c}}
{{q_0}}&{ - {q_1}}&\,\,\,\,{{q_2}}&\,\,\,\,{{q_3}}\\
{{q_1}}&\,\,\,\,{{q_0}}&\,\,\,\,{{q_3}}&{ - {q_2}}\\
{{q_2}}&\,\,\,\,{{q_3}}&\,\,\,\,{{q_0}}&{ - {q_1}}\\
{{q_3}}&{ - {q_2}}&\,\,\,\,{{q_1}}&\,\,\,\,{{q_0}}
\end{array}} \right)
\end{equation}

\noindent is an isomorphism amongst the algebra of matrices the above form and ${{\mathbb{H}}_{S}}$. $L \left( q \right)$ is called the left matrix representation for split quaternions $q\in {{\mathbb{H}}_{S}}.$

\noindent In a similar manner, we define the right matrix representation for the split quaternion $q,$ as  \cite{8}.

\begin{equation}
R :{\mathbb{H}_S} \to {M_4}\left( \mathbb{R} \right),\,\,R \left( q \right) = \left( {\begin{array}{*{20}{c}}
{{q_0}}&{ - {q_1}}&\,\,\,\,{{q_2}}&\,\,\,\,{{q_3}}\\
{{q_1}}&\,\,\,\,{{q_0}}&{ - {q_3}}&\,\,\,\,{{q_2}}\\
{{q_2}}&{ - {q_3}}&\,\,\,\,{{q_0}}&\,\,\,\,{{q_1}}\\
{{q_3}}&\,\,\,\,{{q_2}}&{ - {q_1}}&\,\,\,\,{{q_0}}
\end{array}} \right).
\end{equation}

\begin{theorem}
(\cite{8}) If $q,p\in {{\mathbb{H}}_{S}}$  and $c\in \mathbb{R}$, then we have:
\begin{description}
  \item[i.] 	$L \left( q+p \right)=L \left( q \right)+L \left( p \right),\,\,\,\,R \left( q+p \right)=R \left( q \right)+R \left( p \right).$
  \item[ii.] 	$L \left( cq \right)=cL \left( q \right),\,\,\,R \left( cq \right)=cR \left( q \right).$
  \item[iii.] $qp=L \left( q \right)p,\,\,\,\,\,qp=R \left( p \right)q,\,\,L \left( q \right)R \left( p \right)=R \left( p \right)L \left( q \right).$
  \item[iv.] $L \left( qp \right)=L \left( q \right)L \left( p \right),\,\,\,R \left( qp \right)=R \left( q \right)R \left( p \right).$
  \item[v.]	$L \left( {{q}^{-1}} \right)={{L }^{-1}}\left( q \right),\,\,\,\,R \left( {{q}^{-1}} \right)={{R }^{-1}}\left( q \right),\,\,\,\,\left\| q \right\|\ne 0.$
\end{description}
\end{theorem}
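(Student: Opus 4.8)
The plan is to work throughout under the identification of a split quaternion $q = q_0 + q_1 i + q_2 j + q_3 k$ with its coordinate column vector $(q_0,q_1,q_2,q_3)^{T} \in \mathbb{R}^4$, so that $L(q)$ and $R(q)$ act as genuine $4\times 4$ real matrices on $\mathbb{R}^4$. Granting this, parts \textbf{i} and \textbf{ii} require no computation: every entry of $L(q)$ (and of $R(q)$) is one of $\pm q_0,\pm q_1,\pm q_2,\pm q_3$ occupying a position that does not depend on $q$, so the maps $q \mapsto L(q)$ and $q \mapsto R(q)$ are patently $\mathbb{R}$-linear from $\mathbb{H}_S$ to $M_4(\mathbb{R})$, which gives the additivity of \textbf{i} and the homogeneity of \textbf{ii} simultaneously.

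For \textbf{iii}, I would expand $qp = (q_0 + q_1 i + q_2 j + q_3 k)(p_0 + p_1 i + p_2 j + p_3 k)$ using only the multiplication rules (2.1) and collect the coefficients of $1,i,j,k$. Regarding the resulting vector of four real numbers as linear in $(p_0,p_1,p_2,p_3)$ with $q$ held fixed exhibits it as $L(q)p$, while regarding the very same vector as linear in $(q_0,q_1,q_2,q_3)$ with $p$ held fixed exhibits it as $R(p)q$; this is the pair of identities $qp = L(q)p = R(p)q$. (In fact the computation shows $L(q)x = qx$ and $R(p)x = xp$ for \emph{every} $x \in \mathbb{H}_S$, which is the form the remaining parts actually use.) The identity $L(q)R(p) = R(p)L(q)$ then needs no matrix multiplication at all: for every $x$,
\[
L(q)R(p)\,x = L(q)(xp) = q(xp) = (qx)p = R(p)(qx) = R(p)L(q)\,x,
\]
the central equality being associativity of multiplication in $\mathbb{H}_S$; since the two $4\times 4$ matrices agree on all of $\mathbb{R}^4$, they coincide.

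Part \textbf{iv} is the same associativity argument applied once more: for all $x$, $L(qp)\,x = (qp)x = q(px) = L(q)(px) = L(q)L(p)\,x$, whence $L(qp) = L(q)L(p)$; the $R$-statement is obtained identically from $x(qp) = (xq)p$, the only change being that the two right-multiplications are composed in the opposite order, so that the right representation satisfies $R(q)R(p) = R(pq)$. For \textbf{v}, a direct computation from (2.1) and the definition of the conjugate gives $q\overline q = \overline q q = I_q$ with $I_q = q_0^2 + q_1^2 - q_2^2 - q_3^2$; hence whenever $\|q\| = \sqrt{|I_q|} \neq 0$ the split quaternion $q$ is invertible, with $q^{-1} = \overline q / I_q$. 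Feeding $qq^{-1} = q^{-1}q = 1$ and $L(1) = R(1) = I_4$ into \textbf{iv} gives $L(q)L(q^{-1}) = I_4 = L(q^{-1})L(q)$, i.e.\ $L(q^{-1}) = L^{-1}(q)$, and likewise $R(q^{-1}) = R^{-1}(q)$; one may also verify $\det L(q) = \det R(q) = I_q^2$, confirming that these matrices are invertible exactly when $\|q\| \neq 0$.

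None of this is genuinely difficult — the whole theorem is the representation property combined with associativity — but the one step that demands care is the expansion in \textbf{iii}. Because (2.1) is the split table ($i^2 = -1$ but $j^2 = k^2 = +1$, $ij = -ji = k$, $jk = -kj = -i$, $ki = -ik = j$) rather than Hamilton's, several signs differ from the classical quaternion computation, and one must keep the left factor and the right factor consistently in their proper roles; this is precisely the asymmetry that makes $L$ and $R$ distinct and that dictates the order of the factors in \textbf{iv} and \textbf{v}.
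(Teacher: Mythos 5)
The paper never proves this theorem; it is quoted from the source [8], so there is no internal argument to compare yours with, and judged on its own your proof is the natural one and, for items i, ii, iii and v, complete: identifying a split quaternion with its coordinate column makes i and ii immediate, expanding $qp$ under the split multiplication table gives $qp=L(q)p=R(p)q$ (equivalently $L(q)x=qx$ and $R(p)x=xp$ for every $x$), and associativity yields the commutation $L(q)R(p)=R(p)L(q)$, the multiplicativity of $L$, and then v, since $q\overline q=\overline q q=I_q$ makes $q^{-1}=\overline q /I_q$ available whenever $\left\| q \right\|\neq 0$.

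The point you should not leave buried in a subordinate clause is item iv for $R$. Under the conventions the theorem itself fixes (column vectors, $qp=R(p)q$ as in iii), right multiplication reverses composition: what you actually prove, correctly, is $R(qp)=R(p)R(q)$, an anti-homomorphism. The printed claim $R(qp)=R(q)R(p)$ is false for the matrix $R(q)$ displayed in the paper: taking $q=i$, $p=j$ one gets $R(ij)=R(k)$ while $R(i)R(j)=R(ji)=-R(k)$, and the two expressions agree only when $qp=pq$. So your argument establishes a corrected version of iv rather than the statement as displayed; you must say this explicitly — either the factors on the right-hand side should be reversed, or $R(q)$ must be replaced by its transpose (the row-vector convention), under which the printed order does hold. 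With that caveat recorded, your deduction of v is unaffected, since $R(q^{-1})R(q)=R(qq^{-1})=R(1)=I$ gives $R(q^{-1})=R^{-1}(q)$ in either convention, and the optional remark $\det L(q)=\det R(q)=I_q^{2}$ is consistent with invertibility exactly when $\left\| q \right\|\neq 0$.
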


\begin{definition}
\noindent Two split quaternions $a$ and $b$  are said to be consimilar if there exists a split quaternion $p,\,\,\,\left\| p \right\|\ne 0$  so that $a=\overline{p}b{{p}^{-1}}\,;$ this is denoted as $a\overset{c}{\mathop{\sim }}\,b.$ Obviously, the consimilar split quaternions have the same norm. Also $\overset{c}{\mathop{\sim }}\,$ is an equivalence relation on the split quaternions.
\end{definition}

\begin{theorem}
(\cite{11})   Let $a={{a}_{0}}+{{a}_{1}}i+{{a}_{2}}j+{{a}_{3}}k$ and $b={{b}_{0}}+{{b}_{1}}i+{{b}_{2}}j+{{b}_{3}}k$ be two  split quaternions, $a$ and $b$ are spacelike (or timelike). The equation

\begin{equation} ax = \overline x b
\end{equation}
\noindent has a solution $x\in {{\mathbb{H}}_{S}}$ which  $\left\| x \right\| \ne 0$ the necessary and sufficient condition $\left\| a \right\|=\left\| b \right\|.$

\noindent Under this circumstances, if $a+\overline{b}\ne 0,$ the equation (10)  has a solution

\[x=\lambda \left( \overline{a}+b \right)\]

\noindent where $\lambda \in \mathbb{R}$. If $a+\overline{b}=0,$ the equation (10) has a solution

\[x={{x}_{0}}+{{x}_{1}}i+{{x}_{2}}j+{{x}_{3}}k\]

\noindent where ${{a}_{0}}{{x}_{0}}-{{a}_{1}}{{x}_{1}}+{{a}_{2}}{{x}_{2}}+{{a}_{3}}{{x}_{3}}=0.$

\end{theorem}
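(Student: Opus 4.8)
The plan is to reduce the split quaternion equation $ax=\overline{x}b$ to a real linear system using the left and right matrix representations introduced above, and then to analyze that system case by case according to whether $a+\overline{b}$ vanishes. First I would rewrite the equation $ax=\overline{x}b$ in the form $L(a)x = R(b)\overline{x}$, where on the left I use $ax = L(a)x$ (Theorem~2.1, part iii) and on the right $\overline{x}b = R(b)\overline{x}$. Writing $x = x_0 + x_1 i + x_2 j + x_3 k$ and noting that conjugation is the real-linear map $\overline{x} = Nx$ with $N = \diag(1,-1,-1,-1)$ acting on the coordinate vector, the equation becomes $\bigl(L(a) - R(b)N\bigr)x = 0$ in $\mathbb{R}^4$. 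So a nonzero (indeed norm-nonzero) solution exists iff the $4\times 4$ real matrix $M_{a,b} := L(a) - R(b)N$ is singular, and I would compute $\det M_{a,b}$ explicitly from the displayed forms of $L$ and $R$; the expectation is that it factors through $\|a\|^2 - \|b\|^2$ (equivalently $I_a - I_b$), giving the necessity and sufficiency of $\|a\| = \|b\|$ once the spacelike/timelike hypothesis rules out the degenerate null case and guarantees the solution can be chosen with $\|x\|\neq 0$.

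Next I would verify the two explicit solution formulas directly. For the generic case $a + \overline{b} \neq 0$, set $x = \lambda(\overline{a} + b)$ with $\lambda \in \mathbb{R}$ and substitute: compute $a\overline{a} = I_a = \|a\|^2$ (up to sign, using that $a$ is spacelike or timelike), compute $ab$, and compare with $\overline{x}b = \lambda\overline{(\overline a + b)}b = \lambda(a + \overline b)b = \lambda(ab + \overline b b) = \lambda(ab + I_b)$. Since $\|a\|=\|b\|$ forces $I_a = \pm I_b$ with the same sign under the spacelike/timelike hypothesis, the scalar parts match and the identity $ax = \overline x b$ follows; I would also check $\|x\|\neq 0$, which amounts to $I_{\overline a + b}\neq 0$ and is where one uses $a+\overline b\neq 0$ together with $\|a\|=\|b\|$. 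For the exceptional case $a + \overline{b} = 0$, i.e. $b = -\overline{a}$, the equation becomes $ax = -\overline{x}\,\overline{a} = -\overline{xa}$; writing this out in coordinates with the stated linear constraint $a_0x_0 - a_1x_1 + a_2x_2 + a_3x_3 = 0$ should make both sides agree, and I would simply confirm the coordinate identity.

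The main obstacle I anticipate is the case analysis hidden inside the phrase \emph{$a$ and $b$ are spacelike (or timelike)}: because ${\mathbb{H}_S}$ has indefinite norm, $\|a\| = \|b\|$ only records $|I_a| = |I_b|$, so one must be careful that $I_a$ and $I_b$ actually have the \emph{same} sign (both positive or both negative) for the substitution arguments to close — this is exactly what the spacelike-or-timelike hypothesis buys, and it must be invoked explicitly when matching scalar parts. A secondary technical point is ensuring the solution is norm-nonzero rather than merely nonzero, since the null cone is nontrivial here; in the generic case this means checking $I_{\overline a + b}\neq 0$, and in the degenerate case it means showing the linear constraint $a_0x_0 - a_1x_1 + a_2x_2 + a_3x_3 = 0$ admits a solution vector off the null cone, which is a short linear-algebra argument. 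Apart from these sign-bookkeeping issues, the proof is a direct computation via the real representation, mirroring the real quaternion case in \cite{5,7}.
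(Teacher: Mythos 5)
The paper never proves this statement --- it is quoted verbatim from \cite{11} --- so your reconstruction has to stand on its own, and as written it has two genuine gaps, both caused by the indefiniteness of $I_q$. First, the determinant reduction does not deliver the equivalence you want. Singularity of $M_{a,b}=L(a)-R(b)N$ characterizes the existence of a \emph{nonzero} solution, not of a solution with $\left\| x \right\|\neq 0$, and these really differ here; worse, the singularity locus is not $\{\left\| a \right\|=\left\| b \right\|\}$. For $a=1+2i-j+k$ (timelike, $\left\| a \right\|=\sqrt{3}$) and $b=3i$ ($\left\| b \right\|=3$), the null element $x=1+j$ satisfies $ax=3i+3k=\overline{x}b$, so $M_{a,b}$ is singular although $\left\| a \right\|\neq\left\| b \right\|$; hence $\det M_{a,b}$ cannot factor through $\left\| a \right\|^{2}-\left\| b \right\|^{2}$ as you expect. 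Necessity should instead be done in one line from multiplicativity of $I$: if $ax=\overline{x}b$ with $I_x\neq 0$, then $I_aI_x=I_xI_b$, so $I_a=I_b$, which gives $\left\| a \right\|=\left\| b \right\|$ and the matching causal characters for free.

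Second, the step you downgrade to ``a secondary technical point'' is where the argument actually breaks: $a+\overline{b}\neq 0$ together with $\left\| a \right\|=\left\| b \right\|$ (both timelike, say) does \emph{not} imply $I_{\overline{a}+b}\neq 0$. Take $a=i$ and $b=1+i+j$: both are timelike with $I_a=I_b=1$ and $a+\overline{b}=1-j\neq 0$, yet $\overline{a}+b=1+j$ is null; solving $ix=\overline{x}(1+i+j)$ coordinatewise gives exactly $x=t(1+j)$, $t\in\mathbb{R}$, so every solution is null and no argument can produce a solution with $\left\| x \right\|\neq 0$ in this case. Thus your plan to ``check $\left\| x \right\|\neq 0$'' in the generic case cannot be carried out; this is in fact a defect of the quoted statement itself (the norm-nonzero clause of the sufficiency direction fails as stated), which your write-up silently assumes is reparable. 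The parts of your proposal that do work are the substitution checks: $x=\lambda\left(\overline{a}+b\right)$ solves the equation precisely because $I_a=I_b$ (same sign, as you note), and when $a+\overline{b}=0$ the equation is equivalent to the vanishing of the scalar part of $ax$, i.e. $a_0x_0-a_1x_1+a_2x_2+a_3x_3=0$; moreover a hyperplane in signature $(2,2)$ always contains non-null vectors, since totally isotropic subspaces have dimension at most $2$, so that case is fine.
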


\begin{corollary}
\noindent From the Theorem 2.3, we can write $a=\overline{p}\left\| a \right\|{{p}^{-1}},$ for split quaternion $a$, such that $\left\| a \right\| \ne 0\,$ and $a \notin \mathbb{R},$ where $p=\left\| a \right\|+\overline{a}.$

\end{corollary}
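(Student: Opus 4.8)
The plan is to read the statement as saying that $a$ is consimilar to the positive real scalar $\|a\|$, and to obtain it as the special case $b=\|a\|$ of Theorem 2.3. First I would check that this choice meets the hypotheses of that theorem: since $b=\|a\|\in\mathbb{R}$ and $\|a\|\neq0$, we have $I_b=\|a\|^2>0$, so $b$ is timelike and $\|b\|=\|a\|$; thus the condition $\|a\|=\|b\|$ holds automatically, and — working with $a$ timelike, so that $a$ and $b$ have the same causal character — Theorem 2.3 applies to the equation $ax=\overline{x}\,\|a\|$.

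Next I would identify the relevant solution. Here $\overline{b}=\|a\|$, so $a+\overline{b}=a+\|a\|$, which is nonzero because $a\notin\mathbb{R}$: the equality $a+\|a\|=0$ would force $a=-\|a\|\in\mathbb{R}$. Hence the branch of Theorem 2.3 in force produces a solution
\[
x=\lambda\bigl(\overline{a}+b\bigr)=\lambda\bigl(\|a\|+\overline{a}\bigr)=\lambda p,\qquad \lambda\in\mathbb{R},
\]
with $p=\|a\|+\overline{a}$ exactly as in the statement. Moreover Theorem 2.3 supplies such a solution with $\|x\|\neq0$; since $\|x\|=|\lambda|\,\|p\|$, this forces $\lambda\neq0$ and, crucially, $\|p\|\neq0$, so that $p^{-1}$ exists (Theorem 2.1(v)).

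Substituting $x=p$ into $ax=\overline{x}\,\|a\|$ then gives $ap=\overline{p}\,\|a\|$, and right-multiplication by $p^{-1}$ yields $a=\overline{p}\,\|a\|\,p^{-1}$, as claimed. As a self-contained check one can verify $ap=\overline{p}\,\|a\|$ directly: from $\overline{\overline{a}}=a$ one gets $\overline{p}=\|a\|+a$, so $\overline{p}\,\|a\|=\|a\|^2+\|a\|\,a$, while $ap=\|a\|\,a+a\overline{a}=\|a\|\,a+I_a$; the two agree exactly when $a\overline{a}=I_a=\|a\|^2$, which is just the timelike hypothesis on $a$.

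The step I expect to be the genuine obstacle is the invertibility of $p$, i.e. $\|p\|\neq0$: a priori $p=\|a\|+\overline{a}$ could be a null split quaternion, in which case $p^{-1}$ — and with it the identity $a=\overline{p}\,\|a\|\,p^{-1}$ — would be meaningless. Establishing $\|p\|\neq0$ is exactly what the existence clause of Theorem 2.3 delivers for us (a nonzero-norm solution of the form $\lambda p$); everything else — the norm equality, the computation of $a+\overline{b}$, and the final substitution — is routine once that clause is invoked.
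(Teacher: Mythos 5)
Your route is exactly the one the paper intends: the corollary is stated with no proof beyond the words ``from Theorem 2.3,'' and your specialization $b=\|a\|$ (so $\|b\|=\|a\|$, $a+\overline b=a+\|a\|\neq 0$ since $a\notin\mathbb{R}$), followed by $ap=\overline p\,\|a\|$ and right multiplication by $p^{-1}$, is that derivation made explicit. Your added direct check, that $ap=\overline p\,\|a\|$ is equivalent to $a\overline a=\|a\|^{2}$, is a genuine improvement: it shows the identity forces $a$ to be timelike, so your tacit restriction to timelike $a$ is not a convenience but a necessity (for spacelike $a$ one has $a\overline a=-\|a\|^{2}$ and the stated formula fails), i.e.\ the corollary's hypotheses ``$\|a\|\neq0$, $a\notin\mathbb{R}$'' are already too weak.

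The one step that does not actually go through is the one you yourself flag as the crux: deducing $\|p\|\neq0$ from the existence clause of Theorem 2.3. That theorem (as quoted) makes two separate assertions --- that a solution of nonzero norm exists when $\|a\|=\|b\|$, and that every $x=\lambda(\overline a+b)$ is a solution --- but it nowhere says the nonzero-norm solution lies in that family, and conflating the two is what your argument needs. In fact it can fail: for timelike $a$ a short computation gives $I_{p}=p\overline p=2\|a\|\bigl(\|a\|+a_{0}\bigr)$, which vanishes when $a_{0}=-\|a\|$. For example $a=-1+i+j$ is timelike, non-real, with $\|a\|=1$ and $a+\overline b=i+j\neq0$, yet $p=\|a\|+\overline a=-i-j$ is null, and one checks directly that every solution of $ax=\overline x\,\|a\|$ is a real multiple of $i+j$, so $a$ is not consimilar to $\|a\|$ at all. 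So the gap is real: your proof (like the corollary, and like the quoted form of Theorem 2.3) needs the additional hypothesis $a_{0}\neq-\|a\|$, equivalently that $\|a\|+\overline a$ is non-null. This defect is inherited from the paper rather than introduced by you, but since your justification of $\|p\|\neq0$ rests precisely on that clause, it is the step that would fail.
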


\begin{theorem}
(\cite{11})  Let $a\in {{\mathbb{H}}_{S}}$ where $\left\| a \right\| \ne 0\,$ and $a \notin \mathbb{R}$. Then the quadratic equation ${{x}^{2}}=a$ has two split quaternion solutions as follow

\begin{equation}
x =  \pm \frac{{{{\left\| a \right\|}^{\frac{1}{2}}}\left( {\left\| a \right\| + a} \right)}}{{\left\| {\left\| a \right\| + a} \right\|}} =  \pm \left( {{\lambda _0} + {\lambda _1}a} \right),
\end{equation}

\noindent where ${{\lambda }_{0}}=\frac{{{\left\| a \right\|}^{\frac{3}{2}}}}{\left\| \left\| a \right\|+a \right\|},\,\,\,{{\lambda }_{1}}=\frac{{{\left\| a \right\|}^{\frac{1}{2}}}}{\left\| \left\| a \right\|+a \right\|}.$
\end{theorem}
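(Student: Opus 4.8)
The plan is to solve the quadratic equation $x^2 = a$ by reducing it to the linear consimilarity-type problem already handled in Theorem 2.3 and Corollary 2.4. First I would invoke Corollary 2.4 to write $a = \bar p \, \|a\| \, p^{-1}$ with $p = \|a\| + \bar a$; this exhibits $a$ as consimilar to the positive real scalar $\|a\|$. The idea is that a square root of $a$ should be built from a square root of $\|a\|$ conjugated by the same $p$, so I would propose the candidate $x = \bar p \, \|a\|^{1/2} \, p^{-1}$ and verify directly that $x^2 = \bar p \, \|a\|^{1/2} p^{-1} \bar p \, \|a\|^{1/2} p^{-1}$; the obstacle here is that $p^{-1}\bar p$ is \emph{not} a scalar in general, so this naive substitution does not immediately collapse. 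Hence the more robust route is to look for a solution of the special form $x = \lambda_0 + \lambda_1 a$ with $\lambda_0,\lambda_1 \in \mathbb{R}$, motivated by the fact that $1$ and $a$ commute and span a commutative subalgebra of $\mathbb{H}_S$ containing $a$.

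Working inside that commutative subalgebra, I would compute $x^2 = (\lambda_0 + \lambda_1 a)^2 = \lambda_0^2 + 2\lambda_0\lambda_1 a + \lambda_1^2 a^2$. Using the quadratic identity satisfied by every split quaternion, namely $a^2 - 2a_0 a + I_a = 0$ equivalently $a^2 = 2a_0 a - \|a\|^2 \,\mathrm{sgn}(I_a)$ — more precisely $a \bar a = I_a$ and $a + \bar a = 2a_0$, so $a^2 = (2a_0)a - I_a$ — I would substitute to get $x^2 = \bigl(\lambda_0^2 - \lambda_1^2 I_a\bigr) + \bigl(2\lambda_0\lambda_1 + 2a_0\lambda_1^2\bigr) a$. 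Setting this equal to $a = 0 + 1\cdot a$ gives the two real equations $\lambda_0^2 - \lambda_1^2 I_a = 0$ and $2\lambda_1(\lambda_0 + a_0\lambda_1) = 1$. The first forces $\lambda_0 = \pm \lambda_1 \|a\|^{... }$ — one must be careful about the sign of $I_a$ here, and this sign bookkeeping is the main technical obstacle: the formula as stated uses $\|a\| = \sqrt{|I_a|}$, so the relation between $\lambda_0^2$ and $\lambda_1^2$ and the resulting consistency of the second equation has to be checked in the timelike and spacelike cases separately.

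Rather than push through the sign analysis by brute force, I would instead \emph{derive} the stated closed form from Theorem 2.3. Apply Theorem 2.3 (or Corollary 2.4) to the split quaternion $a$: since $\|a\|\neq 0$, the element $b := \|a\|^{1/2} + a$ — wait, more precisely one applies the theorem to the auxiliary element whose square root is sought — satisfies $\|a\| + a \neq 0$ (because if $\|a\|+a=0$ then $a=-\|a\|\in\mathbb R$, contradicting $a\notin\mathbb R$), so $\|\,\|a\|+a\,\|$ is a genuine nonzero real scalar and division by it is legitimate. Then I would verify by a direct computation in the commutative subalgebra $\mathbb{R}\oplus\mathbb{R}a$ that
\[
\left(\frac{\|a\|^{1/2}\bigl(\|a\|+a\bigr)}{\|\,\|a\|+a\,\|}\right)^{2}
= \frac{\|a\|\,\bigl(\|a\|+a\bigr)^2}{\|\,\|a\|+a\,\|^{2}} = a,
\]
where the last equality uses $\|w\|^2 = |I_w|$ together with the identity $(\|a\|+a)^2 = \|a\|^2 + 2\|a\|a + a^2$ and the minimal equation $a^2 = 2a_0 a - I_a$; this should simplify, after again invoking $\|a\| = \sqrt{|I_a|}$, to exactly $a$ times the scalar $\|\,\|a\|+a\,\|^2/\|a\|$, cancelling the denominator. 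Finally I would read off $\lambda_0 = \|a\|^{3/2}/\|\,\|a\|+a\,\|$ and $\lambda_1 = \|a\|^{1/2}/\|\,\|a\|+a\,\|$ from $\|a\|^{1/2}(\|a\|+a)/\|\,\|a\|+a\,\| = \lambda_0 + \lambda_1 a$, and note that negating $x$ gives the second solution; uniqueness of exactly these two solutions follows because any solution lies in the commutative field-like subalgebra generated by $a$, where $t^2 = a$ has at most two roots. The genuinely delicate point throughout is keeping the absolute value in $\|a\| = \sqrt{|I_a|}$ consistent so that the squared-norm in the denominator really cancels the expanded numerator in both the timelike and spacelike regimes.
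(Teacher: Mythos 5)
The step you defer is exactly where the argument breaks, and the ``sign bookkeeping'' you postpone is not bookkeeping but a genuine obstruction. Using $a^{2}=2a_{0}a-I_{a}$ one gets $\left( \left\| a \right\|+a \right)^{2}=\left( \left\| a \right\|^{2}-I_{a} \right)+2\left( \left\| a \right\|+a_{0} \right)a$ and $I_{\left\| a \right\|+a}=\left\| a \right\|^{2}+2\left\| a \right\|a_{0}+I_{a}$. In the timelike case ($I_{a}=\left\| a \right\|^{2}$) your candidate squares to $\mathrm{sgn}\left( \left\| a \right\|+a_{0} \right)a$, which is $-a$ whenever $a_{0}<-\left\| a \right\|$ (for $a=-2+j$ the equation $x^{2}=a$ in fact has no solution at all); in the spacelike case ($I_{a}=-\left\| a \right\|^{2}$) the scalar term $2\left\| a \right\|^{2}$ does not cancel, and indeed $a=1+2j$ satisfies every stated hypothesis yet has no square root in $\mathbb{H}_{S}$. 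So the computation you assert ``should simplify to exactly $a$'' does not, unless one adds the extra hypothesis that $a$ is timelike with $\left\| a \right\|+a_{0}>0$ (equivalently $I_{\left\| a \right\|+a}>0$); your sketch never confronts this, and no amount of care with $\left\| a \right\|=\sqrt{\left| I_{a} \right|}$ repairs it. A second concrete error: from $\left\| a \right\|+a\neq 0$ you conclude that $\left\| \left\| a \right\|+a \right\|\neq 0$, but $\mathbb{H}_{S}$ has nonzero null elements; for $a=j$ we have $\left\| a \right\|=1\neq 0$ and $a\notin \mathbb{R}$, yet $\left\| 1+j \right\|=0$, so the displayed formula is not even defined there.

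Your uniqueness argument also fails: any root does lie in $\mathbb{R}\oplus \mathbb{R}a$, but this commutative subalgebra need not be a field (it can be the split-complex plane when the pure part of $a$ is spacelike), and there a quadratic can have four roots. For example $a=5+4j$ is timelike, nonreal, with $\left\| a \right\|=3$, and $x^{2}=a$ has the four solutions $\pm\left( 2+j \right)$ and $\pm\left( 1+2j \right)$, only the first pair being produced by the stated formula. Note finally that there is no proof in the paper to compare against --- Theorem 2.5 is quoted from \cite{11} without argument --- so the whole burden of verification falls on your computation, and as it stands the decisive steps (well-definedness of the denominator, the squaring identity outside the favourable timelike regime, and the count of roots) are either missing or false.
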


\section{Consimilarity of Split Quaternion Matrices}

\noindent The set of $m\times n$ matrices with the split quaternion entries, which is denoted by $\mathbb{H}_{S}^{m\times n}$ with ordinary matrix addition and multiplication is a ring with unity. Let ${{A}^{T}}$, $\overline{A}$ and ${{A}^{*}}=\overline{\left( {{A}^{T}} \right)}$ be transpose, conjugate and   transpose conjugate matrix of $A\in \mathbb{H}_{S}^{n\times n}, respectively.$

\begin{theorem}
(\cite{12}) For any $A\in \mathbb{H}_{S}^{m\times n}$ and $B\in \mathbb{H}_{S}^{n\times s},$ the followings statements are valid:

\begin{description}
  \item[i.] 	${{\left( \overline{A} \right)}^{T}}=\overline{\left( {{A}^{T}} \right)}\,\,;$
  \item[ii.] 	${{\left( AB \right)}^{*}}={{B}^{*}}\,{{A}^{*}};$
  \item[iii.] If $A$ and $B$ are nonsingular,  ${{\left( AB \right)}^{-1}}={{B}^{-1}}{{A}^{-1}};$
  \item[iv.] If $A$ is nonsingular,  ${{\left( {{A}^{*}} \right)}^{-1}}={{\left( {{A}^{-1}} \right)}^{*}};$
  \item[v.] If $A$ is nonsingular, ${{\left( \overline{A} \right)}^{-1}}\ne \overline{\left( {{A}^{-1}} \right)}$, 	in general;
  \item[vi.] If $A$ is nonsingular, ${{\left( {{A}^{T}} \right)}^{-1}}\ne {{\left( {{A}^{-1}} \right)}^{T}}$, 	in general;
  \item[vii.] $\overline{AB}\ne \overline{A}\,\overline{B}$, 	in general;
  \item[viii.] ${{\left( AB \right)}^{T}}\ne {{B}^{T}}{{A}^{T}},$ 	in general.
\end{description}
\end{theorem}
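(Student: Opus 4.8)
\section*{Proof proposal}

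The plan is to split the eight assertions into the four identities (i)--(iv), which rest on the single algebraic fact that conjugation on $\mathbb{H}_S$ is an $\mathbb{R}$-linear anti-involution (i.e.\ $\overline{\overline q}=q$, $\overline{q+p}=\overline q+\overline p$, and $\overline{qp}=\overline p\,\overline q$ for all $p,q\in\mathbb{H}_S$, the product relation being the only one not immediate from the definition and checked at once on the basis $1,i,j,k$), together with the associativity of the ring $\mathbb{H}_S^{n\times n}$; and the four non-identities (v)--(viii), which are handled by explicit small counterexamples.

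For (i), working entry by entry, $\bigl((\overline A)^T\bigr)_{ij}=(\overline A)_{ji}=\overline{A_{ji}}=\overline{(A^T)_{ij}}$. For (ii), again entrywise, $\bigl((AB)^{*}\bigr)_{ij}=\overline{(AB)_{ji}}=\overline{\sum_k A_{jk}B_{ki}}=\sum_k\overline{B_{ki}}\,\overline{A_{jk}}=\sum_k(B^{*})_{ik}(A^{*})_{kj}=(B^{*}A^{*})_{ij}$; the key point is that the transpose and the scalar conjugation each reverse the order of a product, and here the two reversals combine so that the factors land back in the right positions. By contrast $\overline{AB}$ and $(AB)^T$ involve only one reversal, and that mismatch is exactly what makes (vii) and (viii) fail. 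Assertion (iii) is pure ring theory: if $A,B$ are nonsingular, then $(AB)(B^{-1}A^{-1})=A(BB^{-1})A^{-1}=I$ and $(B^{-1}A^{-1})(AB)=B^{-1}(A^{-1}A)B=I$ by associativity, so $AB$ is nonsingular with inverse $B^{-1}A^{-1}$. Assertion (iv) then follows by combining (ii) with (iii): $A^{*}(A^{-1})^{*}=(A^{-1}A)^{*}=I^{*}=I$ and $(A^{-1})^{*}A^{*}=(AA^{-1})^{*}=I$, so $(A^{*})^{-1}=(A^{-1})^{*}$.

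For (vii) and (viii) a $1\times1$ example suffices: with $A=(i)$ and $B=(j)$ one has $\overline{AB}=\overline{ij}=\overline k=-k$ while $\overline A\,\overline B=(-i)(-j)=ij=k$, and $(AB)^T=(k)$ while $B^TA^T=(ji)=(-k)$. For (v) and (vi), however, no $1\times1$ example can work, because for a single split quaternion $q$ with $\|q\|\neq0$ one has $q^{-1}=\overline q/I_q$ and $I_{\overline q}=I_q$, hence $(\overline q)^{-1}=q/I_q=\overline{q^{-1}}$, and trivially $(q^T)^{-1}=q^{-1}=(q^{-1})^T$. One is therefore forced to use a matrix with genuinely non-commuting entries. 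I would take the nonsingular upper-triangular matrix $A=\left(\begin{smallmatrix} i & j\\ 0 & 1\end{smallmatrix}\right)$, with inverse $A^{-1}=\left(\begin{smallmatrix} -i & k\\ 0 & 1\end{smallmatrix}\right)$, and compute $(\overline A)^{-1}=\left(\begin{smallmatrix} i & k\\ 0 & 1\end{smallmatrix}\right)\neq\left(\begin{smallmatrix} i & -k\\ 0 & 1\end{smallmatrix}\right)=\overline{A^{-1}}$ for (v), and $(A^T)^{-1}=\left(\begin{smallmatrix} -i & 0\\ -k & 1\end{smallmatrix}\right)\neq\left(\begin{smallmatrix} -i & 0\\ k & 1\end{smallmatrix}\right)=(A^{-1})^T$ for (vi).

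The only genuinely conceptual step, and the one I would be most careful to get right, is the observation behind (v) and (vi) that the scalar identity really does hold, so that the failure first appears at size $\geq2$ and is driven by the non-commutativity of the off-diagonal entry with the diagonal ones; everything else is either a one-line definitional check, an associativity manipulation, or the arithmetic of $1,i,j,k$, which I would carry out but not belabour here.
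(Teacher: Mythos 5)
Your proposal is correct, but note that the paper itself offers no proof of this theorem to compare against: it is quoted as a known result from \cite{12} and used as a preliminary. Your argument is a complete, self-contained justification and is the standard one. The identities (i)--(iv) do rest exactly on the facts you isolate: entrywise conjugation commutes with transposition, split-quaternion conjugation is an additive anti-involution ($\overline{qp}=\overline{p}\,\overline{q}$, which indeed holds in $\mathbb{H}_S$ and is quickly checked on $i,j,k$), and ring associativity gives (iii), with (iv) following formally from (ii) and (iii). Your counterexamples also check out: for (vii) and (viii) the $1\times1$ choices $A=(i)$, $B=(j)$ give $\overline{AB}=-k\neq k=\overline{A}\,\overline{B}$ and $(AB)^T=(k)\neq(-k)=B^TA^T$; and for (v), (vi) I verified that $A=\left(\begin{smallmatrix} i & j\\ 0 & 1\end{smallmatrix}\right)$ has inverse $\left(\begin{smallmatrix} -i & k\\ 0 & 1\end{smallmatrix}\right)$ and that $(\overline A)^{-1}$, $\overline{A^{-1}}$, $(A^T)^{-1}$, $(A^{-1})^T$ are exactly the matrices you display, differing by a sign in the off-diagonal entry. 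Your remark that (v) and (vi) cannot fail at size $1\times1$ (since $q^{-1}=\overline q/I_q$ with $I_q$ real and central, and $I_{\overline q}=I_q$) is accurate and correctly pinpoints non-commutativity among distinct entries as the source of the failure, which is more than the paper (or its cited source, as quoted here) makes explicit.
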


\noindent A matrix $A\in\mathbb{ H}_{S}^{n\times n}$ is said to be similar a matrix $B\in \mathbb{H}_{S}^{n\times n}$ if there exists a nonsingular matrix $P\in \mathbb{H}_{S}^{n\times n}$ so that ${{P}^{-1}}AP=B.$ The relation, $A$ is similar to $B$, is denoted $A\sim B.$ Similarity is an equivalence relation on the split quaternion matrices.\\

\noindent On the other hand, a complex matrix $A\in {{\mathbb{C}}^{n\times n}}$ is said to be complex consimilar $B\in {{\mathbb{C}}^{n\times n}}$ if a nonsingular matrix $\overline{P}A{{P}^{-1}}=B$ be found. Complex consimilarity is an equivalence relation on ${{\mathbb{C}}^{n\times n}}$ and has been extensively studied \cite{13}. The split quaternion holds an important place in differential geometry and structure theory of Lorentz spaces \cite{8}-\cite{9}, and for this reason consimilarity for complex matrices will be extended for split quaternion matrices.

\noindent If $A,B\in \mathbb{H}_{S}^{n\times n},$  generally ${{\left( AB \right)}^{T}}\ne {{B}^{T}}{{A}^{T}},$ and $\overline{AB}\ne \overline{A}\,\overline{B}.$ Thus the mapping $A\to \overline{P}A{{P}^{-1}}$ is not an equivalence relation on $\mathbb{H}_{S}^{n\times n}.$ Thus we need to give a new definition of consimilarity of split quaternion matrices.

\begin{definition}
\noindent Let $A\in \mathbb{H}_{S}^{n\times n},$  then we define  $\widetilde{A}=jAj.$ We say that $\widetilde{A}$ is the $j-conjugate$ of $A.$
\end{definition}
\noindent For any $A,B\in \mathbb{H}_{S}^{m\times n}$ and $C\in \mathbb{H}_{S}^{n\times s},$ the following equalities are easy to confirm

\begin{description}
  \item[i.] 	$\widetilde{\left( \widetilde{A} \right)}=A;$
  \item[ii.] 	$\widetilde{\,\,\left( A+B \right)}=\widetilde{A}+\widetilde{B};$
  \item[iii.] 	$\widetilde{\left( AC \right)}=\widetilde{A}\widetilde{C};$
  \item[iv.] 	$\overline{\left( \widetilde{A} \right)}=\widetilde{\left( \overline{A} \right)}.$
\end{description}

\begin{theorem}
\noindent If $A\in \mathbb{H}_{S}^{n\times n},$ in that case
\[A\,\,is\,\,nonsingular\,\,\Leftrightarrow \,\,\widetilde{A}\,\,is\,\,nonsingular\,\,\Leftrightarrow {{A}^{*}}\,is\,\,nonsingular.\]

\noindent Furthermore, if $A$ is nonsingular,   ${\left( {{A^*}} \right)^{ - 1}} = {\left( {{A^{ - 1}}} \right)^*}$ and ${\left( {\widetilde A} \right)^{ - 1}} = \widetilde {\left( {{A^{ - 1}}} \right)}$.

\end{theorem}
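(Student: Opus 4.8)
The plan is to reduce everything to the multiplicative identities already on hand, namely $\widetilde{(AC)}=\widetilde A\,\widetilde C$ for the $j$-conjugate and $(AB)^{*}=B^{*}A^{*}$ from Theorem 3.1(ii), together with the observation that $I_n$ is fixed by all three operations involved: $I_n^{T}=\overline{I_n}=I_n$, and $\widetilde{I_n}=jI_nj=j^{2}I_n=I_n$. So the whole proof is a matter of applying the correct conjugation-type map to the defining relations $AA^{-1}=A^{-1}A=I_n$.

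First I would treat the $j$-conjugate. Assume $A$ is nonsingular. Applying $\widetilde{(\cdot)}$ to $AA^{-1}=A^{-1}A=I_n$ and using $\widetilde{(AC)}=\widetilde A\,\widetilde C$ yields $\widetilde A\,\widetilde{(A^{-1})}=\widetilde{(A^{-1})}\,\widetilde A=\widetilde{I_n}=I_n$; hence $\widetilde A$ is nonsingular and, reading off the inverse directly, $(\widetilde A)^{-1}=\widetilde{(A^{-1})}$. For the converse, since $\widetilde{(\widetilde A)}=A$, nonsingularity of $\widetilde A$ forces nonsingularity of $A$ by the same argument applied to $\widetilde A$ in place of $A$.

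Next I would handle the conjugate transpose in exactly the same fashion, but through Theorem 3.1(ii): from $A^{-1}A=AA^{-1}=I_n$ we obtain $A^{*}(A^{-1})^{*}=(A^{-1})^{*}A^{*}=I_n^{*}=I_n$, so $A^{*}$ is nonsingular with $(A^{*})^{-1}=(A^{-1})^{*}$. For the reverse implication I would note that $(\cdot)^{*}$ is an involution on $\mathbb{H}_S^{n\times n}$: transposition is always an involution on matrices, entrywise conjugation of split quaternions satisfies $\overline{\overline q}=q$, and the two commute by Theorem 3.1(i), so $(A^{*})^{*}=A$; thus $A^{*}$ nonsingular implies $A$ nonsingular. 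Chaining the two equivalences gives the displayed chain $A\text{ nonsingular}\Leftrightarrow\widetilde A\text{ nonsingular}\Leftrightarrow A^{*}\text{ nonsingular}$, and the inverse formulas were already extracted along the way.

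The one point requiring care — and the only genuine obstacle — is that one must not use the naive identities $\overline{AB}=\overline A\,\overline B$ or $(AB)^{T}=B^{T}A^{T}$, which fail for split quaternion matrices (Theorem 3.1, items vii and viii). The argument has to go exclusively through $\widetilde{(AC)}=\widetilde A\,\widetilde C$ and $(AB)^{*}=B^{*}A^{*}$, i.e. precisely the mixed identities that do survive in this setting; once those are used in place of the broken ones, the rest is routine bookkeeping.
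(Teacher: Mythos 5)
Your proof is correct and follows essentially the same route as the paper: apply the $j$-conjugation (the paper writes it explicitly as $jAj\,jA^{-1}j=I$, which is your $\widetilde{(AC)}=\widetilde A\,\widetilde C$) and the identity $(AB)^{*}=B^{*}A^{*}$ to $AA^{-1}=I$ to read off the inverses. Your treatment is in fact slightly more complete than the paper's, since you justify the converse implications explicitly via the involutions $\widetilde{(\widetilde A)}=A$ and $(A^{*})^{*}=A$, which the paper leaves implicit.
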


\begin{proof}
\noindent Since $A$ is nonsingular, there exists an matrix ${{A}^{-1}}\in \mathbb{H}_{S}^{n\times n}$ so that $A{{A}^{-1}}=I$. Thus $A{{A}^{-1}}=I\Leftrightarrow jAjj{{A}^{-1}}j=I$ and ${{\left( \widetilde{A} \right)}^{-1}}=\widetilde{\left( {{A}^{-1}} \right)}.$ Similar way, we get ${{A}^{*}}{{\left( {{A}^{-1}} \right)}^{*}}=I.$
\end{proof}

\begin{definition}
\noindent Two split quaternion matrices $A$ and $B$ are expressed as consimilar if nonsingular split quaternion matrix $P$ so that $\widetilde{P}A{{P}^{-1}}=B$ be found, this is denoted as  $A\overset{c}{\mathop{\sim }}\,B.$
\end{definition}

\begin{theorem}
\noindent For $A,B,C\in \mathbb{H}_{S}^{n\times n},$ the followings are satisfied:
\begin{itemize}
  \item Reflexive:  $A\overset{c}{\mathop{\sim }}\,A\,;$
  \item Symmetric: if $A\overset{c}{\mathop{\sim }}\,B,$ then $B\overset{c}{\mathop{\sim }}\,A;$
  \item Transitive: if $A\overset{c}{\mathop{\sim }}\,B \, and \, B\overset{c}{\mathop{\sim }}\,C$ then $A\overset{c}{\mathop{\sim }}\,C.$
\end{itemize}
\end{theorem}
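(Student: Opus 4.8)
The plan is to verify the three defining properties directly from Definition~3.4, using only the algebraic identities already recorded for the $j$-conjugate (in particular $\widetilde{(\widetilde A)}=A$ and $\widetilde{(AC)}=\widetilde A\,\widetilde C$), Theorem~3.3 (which says $\widetilde P$ is nonsingular whenever $P$ is, and $(\widetilde P)^{-1}=\widetilde{(P^{-1})}$), and Theorem~3.1(iii) (which gives $(QP)^{-1}=P^{-1}Q^{-1}$). For reflexivity I would take $P=I$: since $j^2=1$ we have $\widetilde I=jIj=j^2I=I$, so $\widetilde I\,A\,I^{-1}=A$ and hence $A\overset{c}{\mathop{\sim}}A$.

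For symmetry, suppose $A\overset{c}{\mathop{\sim}}B$, say $\widetilde P A P^{-1}=B$ with $P$ nonsingular. Multiplying on the left by $(\widetilde P)^{-1}$ and on the right by $P$ yields $A=(\widetilde P)^{-1}B P$. By Theorem~3.3, $(\widetilde P)^{-1}=\widetilde{(P^{-1})}$, and trivially $P=(P^{-1})^{-1}$, so putting $Q:=P^{-1}$ (which is nonsingular) gives $A=\widetilde Q\,B\,Q^{-1}$, i.e. $B\overset{c}{\mathop{\sim}}A$. For transitivity, suppose $\widetilde P A P^{-1}=B$ and $\widetilde Q B Q^{-1}=C$ with $P,Q$ nonsingular. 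Substituting the first relation into the second, $C=\widetilde Q\bigl(\widetilde P A P^{-1}\bigr)Q^{-1}=(\widetilde Q\,\widetilde P)\,A\,(P^{-1}Q^{-1})$. By property (iii) of the $j$-conjugate, $\widetilde Q\,\widetilde P=\widetilde{(QP)}$, and by Theorem~3.1(iii), $P^{-1}Q^{-1}=(QP)^{-1}$; since $R:=QP$ is nonsingular, $C=\widetilde R\,A\,R^{-1}$, so $A\overset{c}{\mathop{\sim}}C$.

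The computations are entirely routine, so there is no genuine obstacle; the one point worth emphasizing is that the argument rests precisely on the two features that distinguish $\widetilde{(\cdot)}$ from ordinary conjugation — that $j$-conjugation is multiplicative ($\widetilde{(AC)}=\widetilde A\,\widetilde C$) and commutes with inversion ($\widetilde{(P^{-1})}=(\widetilde P)^{-1}$). These are exactly the properties that fail for $A\mapsto\overline A$, which is why the naive map $A\mapsto\overline P A P^{-1}$ was not an equivalence relation; once they are available, the standard verification goes through without incident.
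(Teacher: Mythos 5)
Your proof is correct and follows essentially the same route as the paper: take $P=I_n$ for reflexivity, multiply $\widetilde P A P^{-1}=B$ by $(\widetilde P)^{-1}$ and $P$ for symmetry, and compose the two transforming matrices via $\widetilde Q\,\widetilde P=\widetilde{(QP)}$ and $(QP)^{-1}=P^{-1}Q^{-1}$ for transitivity. If anything, your symmetry step is slightly more careful than the paper's, since you explicitly identify the new transforming matrix as $Q=P^{-1}$ using $(\widetilde P)^{-1}=\widetilde{(P^{-1})}$, a point the paper leaves implicit.
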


\begin{proof}
\noindent \\
\begin{itemize}
  \item Reflexive:  $\widetilde{{{I}_{n}}}\,A\,I_{n}^{-1}=A$ trivially, for $A\in \mathbb{H}_{S}^{n\times n}.$ So, consimilarity is reflexive.
  \item Symmetric: Let $\widetilde{P}A{{P}^{-1}}=B.$ As $P$ is nonsingular, we have \[\begin{array}{l}
{\left( {\widetilde P} \right)^{ - 1}}BP = {\left( {\widetilde P} \right)^{ - 1}}\widetilde PAP{P^{ - 1}}\\
\,\,\,\,\,\,\,\,\,\,\,\,\,\,\,\,\,\,\,\,\,\,\,\,\,\,\,\,\,= {I_n}A{I_n}\\
\,\,\,\,\,\,\,\,\,\,\,\,\,\,\,\,\,\,\,\,\, \,\,\,\,\,\,\,\,= A.
\end{array}\]

\noindent So, consimilarity is symmetric.
  \item 	Transsitive:  Let $\widetilde{{{P}_{1}}}A{{P}_{1}}^{-1}=B$ and $\widetilde{{{P}_{2}}}B{{P}_{2}}^{-1}=C.$ Then \[\begin{array}{l}
C = \widetilde {{P_2}}\widetilde {{P_1}}AP_1^{ - 1}P_2^{ - 1}\\
\,\,\,\,\,\, = \left( {\widetilde {{P_2}{P_1}}} \right)A{\left( {{P_2}{P_1}} \right)^{ - 1}}.
\end{array}\]
\noindent So, consimilarity is transitive.
\end{itemize}

\end{proof}

\noindent Then, by Theorem 3.3  consimilarity is an equivalence relation on $\mathbb{H}_{S}^{n\times n}.$ Clearly if $A\in {{\mathbb{C}}^{n\times n}}$, then $\overline{A}=\widetilde{A}=jAj.$ Thus, $A\in {{\mathbb{C}}^{n\times n}}$ is consimilar to $B\in {{\mathbb{C}}^{n\times n}}$ as complex matrices if $A$ is consimilar to $B$ as split quaternion matrices. Then, consimilarity relation in  $\mathbb{H}_{S}^{n\times n}$ is a natural extension of complex consimilarity in ${{\mathbb{C}}^{n\times n}}$.

\begin{theorem}
\noindent If $A,B\in \mathbb{H}_{S}^{n\times n},$ then \[A\overset{c}{\mathop{\sim }}\,B\,\,\Leftrightarrow \,\,jA\sim jB\,\,\Leftrightarrow Aj\sim Bj\,\,\Leftrightarrow jA\sim Bj.\]

\end{theorem}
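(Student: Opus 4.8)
The plan is to trade each of the three similarities for the consimilarity by multiplying through by the split quaternion $j$, using only that multiplication in $\mathbb{H}_{S}^{n\times n}$ is associative, that $j^{2}=1$ (so the scalar matrix $jI_{n}$ is its own inverse), and that $\widetilde{A}=jAj=(jI_{n})A(jI_{n})$ by Definition 3.1. The whole argument is then a matter of keeping track of where the factor $j$ sits; the only external input is Theorem 3.2, which guarantees that $\widetilde{P}$ is nonsingular whenever $P$ is and that $\big(\widetilde{P}\,\big)^{-1}=\widetilde{P^{-1}}$.

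First I would establish $A\overset{c}{\mathop{\sim }}B\Leftrightarrow jA\sim jB$. For ``$\Rightarrow$'', assume $P$ is nonsingular with $\widetilde{P}AP^{-1}=B$; left-multiplying by $jI_{n}$ and using $j\widetilde{P}=j(jPj)=(j^{2})Pj=Pj$ gives
\[
jB=j\widetilde{P}AP^{-1}=Pj\,A\,P^{-1}=P(jA)P^{-1},
\]
so $jA\sim jB$. For ``$\Leftarrow$'', assume $Q$ is nonsingular with $Q^{-1}(jA)Q=jB$; left-multiplying by $jI_{n}$, using $j\cdot jB=B$ on the left and $jQ^{-1}j=\widetilde{Q^{-1}}=\big(\widetilde{Q}\,\big)^{-1}$ on the right, gives
\[
B=jQ^{-1}(jA)Q=\big(jQ^{-1}j\big)AQ=\big(\widetilde{Q}\,\big)^{-1}A\big(Q^{-1}\big)^{-1},
\]
which, with $P:=Q^{-1}$ (nonsingular by Theorem 3.2), reads $\widetilde{P}AP^{-1}=B$, i.e.\ $A\overset{c}{\mathop{\sim }}B$.

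Next I would record the purely formal fact that $jX\sim Xj$ for every $X\in\mathbb{H}_{S}^{n\times n}$: since $jI_{n}$ is nonsingular with inverse $jI_{n}$, conjugation by it yields $(jI_{n})^{-1}(jX)(jI_{n})=(jI_{n})(jX)(jI_{n})=(j^{2})X(jI_{n})=Xj$. Applying this with $X=A$ and with $X=B$, and using that $\sim$ is an equivalence relation on $\mathbb{H}_{S}^{n\times n}$ (so that $Aj\sim jA$ and $Bj\sim jB$), we get $jA\sim jB\Leftrightarrow Aj\sim Bj$ and also $jA\sim jB\Leftrightarrow jA\sim Bj$. Combined with the equivalence of the previous paragraph this chains up to
\[
A\overset{c}{\mathop{\sim }}B\ \Leftrightarrow\ jA\sim jB\ \Leftrightarrow\ Aj\sim Bj\ \Leftrightarrow\ jA\sim Bj .
\]

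The step I expect to need the most care is the reverse direction of the first equivalence, where one must move the factor $j$ past the tilde and past an inverse in the correct order: the identity $j\widetilde{P}=Pj$ together with Theorem 3.2's $\big(\widetilde{Q}\,\big)^{-1}=\widetilde{Q^{-1}}$ must be invoked exactly where shown, and one must keep in mind that only the scalar matrix $jI_{n}$ may be reassociated freely, since no genuine reordering of matrix factors is permitted in the noncommutative ring $\mathbb{H}_{S}^{n\times n}$. Everything else reduces to associativity and $j^{2}=1$.
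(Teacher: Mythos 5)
Your proof is correct and takes essentially the same route as the paper's: the paper likewise multiplies $\widetilde{P}A{P}^{-1}=jPjA{P}^{-1}=B$ through by $j$ to get $P\left( jA \right){P}^{-1}=jB$, and then uses $j^{-1}\left( jX \right)j=Xj$ (so $jA\sim Aj$, $jB\sim Bj$) together with the fact that $\sim$ is an equivalence relation to chain the remaining equivalences. Your only addition is to write out explicitly the reverse implication $jA\sim jB\Rightarrow A\overset{c}{\mathop{\sim }}\,B$ via $\widetilde{Q^{-1}}=\left( \widetilde{Q} \right)^{-1}$, which the paper leaves implicit in its ``$\Leftrightarrow$''.
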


\begin{proof}
\noindent Since $A\overset{c}{\mathop{\sim }}\,B\,\,\Leftrightarrow $ there exists a nonsingular matrix $P\in \mathbb{H}_{S}^{n\times n}$ so that $\widetilde{P}A{{P}^{-1}}=jPjA{{P}^{-1}}=B.$ Thus $A\overset{c}{\mathop{\sim }}\,B\,\,\Leftrightarrow PjA{{P}^{-1}}=jB\Leftrightarrow jA\sim jB.$
Since ${{j}^{-1}}jAj=Aj,\,$ we get $\,jA\sim Aj$ and $jB\sim Bj.$ Therefore $jA\sim jB\Leftrightarrow Aj\sim Bj\Leftrightarrow jA\sim Bj.$

\end{proof}

\begin{definition}
\noindent Let $A\in \mathbb{H}_{S}^{n\times n},\,\lambda \in {{\mathbb{H}}_{S}}.$ If there exists $0\ne x\in \mathbb{H}_{S}^{n\times 1}$ such that
\[A\widetilde x = x\lambda \,\,\left( {A\widetilde x = \lambda x} \right)\]

\noindent then $\lambda $ is said to be a right (left) coneigenvalues of $A$ and $x$ is said to be a coneigenvector of $A$ corresponding to the right (left) coneigenvalue $\lambda .$ The set of right coneigenvalues is defined as

\[\widetilde {{\sigma _r}}\left( A \right)  = \left\{ {\lambda  \in {\mathbb{H}_S}:\,A\widetilde x = x\lambda ,\,for\,\,some\,\,x \ne 0} \right\}.\]

\noindent The set of left coneigenvalues is similarly defined and is denoted by $\widetilde {{\sigma _l}}\left( A \right)$ .

\end{definition}

\noindent Recall that if $ x\in \mathbb{H}_{S}^{n\times 1} (x \ne 0),$ and $\lambda  \in {\mathbb{H}_S}$ satisfying $Ax=x\lambda \,\,\left( Ax=\lambda x \right)$, we call $x$ an eigenvector of $A$, while $\lambda$ is an right (left) eigenvalue of $A.$ We also say that $x$ is an eigenvector corresponding to the right(left) eigenvalue $\lambda.$

\begin{theorem}
Let $A,B\in \mathbb{H}_{S}^{n\times n},$ if  $A$ is consimilar to $B$, then $A$ and $B$ have the same right coneigenvalues.
\end{theorem}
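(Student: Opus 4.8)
The plan is to check directly that consimilarity carries a right coneigenpair of $A$ to a right coneigenpair of $B$, the only subtlety being that the coneigenvector must be transported by $\widetilde{P}$ rather than by $P$ itself; equality of the two coneigenvalue sets then follows from the symmetry of $\overset{c}{\sim}$.

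Concretely, suppose $A\overset{c}{\sim}B$, so that $\widetilde{P}AP^{-1}=B$ for some nonsingular $P\in\mathbb{H}_S^{n\times n}$, and let $\lambda\in\widetilde{\sigma_r}(A)$, witnessed by some $x\neq 0$ with $A\widetilde{x}=x\lambda$. I would set $y=\widetilde{P}x$. By Theorem 3.2 the matrix $\widetilde{P}$ is nonsingular, so $y\neq 0$. The identity that makes everything work is
\[
\widetilde{\left(\widetilde{P}x\right)}=\widetilde{\left(\widetilde{P}\right)}\,\widetilde{x}=P\widetilde{x},
\]
which uses only the two elementary properties $\widetilde{(AC)}=\widetilde{A}\widetilde{C}$ and $\widetilde{\left(\widetilde{A}\right)}=A$ of the $j$-conjugate recorded right after its definition. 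Hence
\[
B\widetilde{y}=B\left(P\widetilde{x}\right)=\widetilde{P}AP^{-1}P\widetilde{x}=\widetilde{P}\left(A\widetilde{x}\right)=\widetilde{P}(x\lambda)=(\widetilde{P}x)\lambda=y\lambda,
\]
so that $\lambda\in\widetilde{\sigma_r}(B)$. This proves $\widetilde{\sigma_r}(A)\subseteq\widetilde{\sigma_r}(B)$.

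For the reverse inclusion I would invoke the symmetry part of Theorem 3.3: $A\overset{c}{\sim}B$ gives $B\overset{c}{\sim}A$, and repeating the argument with the roles of $A$ and $B$ exchanged yields $\widetilde{\sigma_r}(B)\subseteq\widetilde{\sigma_r}(A)$. Combining the two inclusions gives $\widetilde{\sigma_r}(A)=\widetilde{\sigma_r}(B)$. If one would rather not appeal to symmetry, the same computation started from $A=\widetilde{\left(P^{-1}\right)}BP$ — which is legitimate because $(\widetilde{P})^{-1}=\widetilde{\left(P^{-1}\right)}$ by Theorem 3.2 — gives the opposite inclusion directly, now with coneigenvector $(\widetilde{P})^{-1}z$.

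There is no genuine obstacle in this argument. The one place that requires attention is to resist the ordinary-similarity reflex of conjugating the eigenvector by $P$: because the defining relation $A\widetilde{x}=x\lambda$ involves a $j$-conjugation, one must instead push the coneigenvector forward by $\widetilde{P}$, after which the verification collapses to the identities $\widetilde{\left(\widetilde{A}\right)}=A$, $\widetilde{(AC)}=\widetilde{A}\widetilde{C}$ and the non-singularity statement of Theorem 3.2.
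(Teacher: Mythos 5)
Your proposal is correct and follows essentially the same route as the paper: transport the coneigenvector through the consimilarity relation $\widetilde{P}AP^{-1}=B$; the paper sets $y=P\widetilde{x}$ and ends with $By=\widetilde{y}\lambda$, while you set $y=\widetilde{P}x$ and verify $B\widetilde{y}=y\lambda$ directly, which is the same computation with the coneigenvector named one $j$-conjugation earlier. Your additional care (noting $y\neq 0$ via nonsingularity of $\widetilde{P}$ and supplying the reverse inclusion by symmetry) only makes explicit what the paper leaves implicit.
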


\begin{proof}
\noindent Let $A\overset{c}{\mathop{\sim }}\,B,$ then, there exists a nonsingular matrix $P\in \mathbb{H}_{S}^{n\times n}$ such that $B=\widetilde{P}A{{P}^{-1}}.$ Let $\lambda \in {{\mathbb{H}}_{S}}$ be a right coneigenvalue for the matrix $A,$ then we find the matrix $\,0 \ne \, x\,\in \mathbb{H}_{s}^{n\times 1}\,$ such that $\,A\widetilde x \,=\, x\lambda.\,$ Let $y=P\widetilde{x}.\,$ Finally $\,By=\widetilde{P}A{{P}^{-1}}y=\widetilde{P}A\widetilde{x}=\widetilde{P}x\lambda =\widetilde{y}\lambda .$
\end{proof}

\begin{theorem}
\noindent If $A\in \mathbb{H}_{S}^{n\times n},$ in that case $\lambda $ is right  coneigenvalue of $A$ necessary and sufficient condition for any $\beta  \in {\mathbb{H}_S}\,\,\left( {\left\| \beta  \right\| \ne 0} \right),\,\,\,\,\,\widetilde{\beta }\lambda {{\beta }^{-1}}$ is a right  coneigenvalue of $A.$
\end{theorem}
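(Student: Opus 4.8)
The statement is an equivalence, but only the forward implication carries content; the converse is obtained by specialization. The plan is to take a coneigenvector of $A$ for $\lambda$ and transport it, by right multiplication by a suitable invertible scalar, to a coneigenvector for $\widetilde{\beta}\lambda\beta^{-1}$. So suppose $\lambda$ is a right coneigenvalue of $A$: there is $x\in\mathbb{H}_S^{n\times 1}$, $x\neq 0$, with $A\widetilde{x}=x\lambda$. Fix $\beta\in\mathbb{H}_S$ with $\|\beta\|\neq 0$, so that $\beta^{-1}$ exists; from $\widetilde{\beta}\,\widetilde{\beta^{-1}}=\widetilde{\beta\beta^{-1}}=\widetilde{1}=j^{2}=1$ (property iii of the $j$-conjugate together with $j^{2}=1$) it follows that $\widetilde{\beta}$ is invertible with $(\widetilde{\beta})^{-1}=\widetilde{\beta^{-1}}$. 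I would then propose $y:=x(\widetilde{\beta})^{-1}$ as the candidate coneigenvector; it is nonzero because $x=y\widetilde{\beta}$, so $y=0$ would force $x=0$.

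The verification is a short computation. First, by property iii and then property i ($\widetilde{\widetilde{M}}=M$),
\[
\widetilde{y}=\widetilde{\,x(\widetilde{\beta})^{-1}\,}=\widetilde{x}\,\widetilde{(\widetilde{\beta})^{-1}}=\widetilde{x}\,\beta^{-1}.
\]
Hence $A\widetilde{y}=A\widetilde{x}\,\beta^{-1}=(x\lambda)\beta^{-1}=x\lambda\beta^{-1}$, while $y(\widetilde{\beta}\lambda\beta^{-1})=x(\widetilde{\beta})^{-1}\widetilde{\beta}\lambda\beta^{-1}=x\lambda\beta^{-1}$. Thus $A\widetilde{y}=y(\widetilde{\beta}\lambda\beta^{-1})$ with $y\neq 0$, i.e.\ $\widetilde{\beta}\lambda\beta^{-1}\in\widetilde{\sigma_r}(A)$.

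For the converse, assume $\widetilde{\beta}\lambda\beta^{-1}$ is a right coneigenvalue of $A$ for every $\beta$ with $\|\beta\|\neq 0$. Taking $\beta=1$ (whose norm is $1\neq 0$) gives $\widetilde{1}\,\lambda\,1^{-1}=j^{2}\lambda=\lambda\in\widetilde{\sigma_r}(A)$; alternatively, apply the forward implication to $\mu=\widetilde{\beta}\lambda\beta^{-1}$ with multiplier $\beta^{-1}$, since $\widetilde{\beta^{-1}}\,\mu\,(\beta^{-1})^{-1}=(\widetilde{\beta})^{-1}\widetilde{\beta}\lambda\beta^{-1}\beta=\lambda$. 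I do not expect a genuine obstacle: the whole argument is bookkeeping with the $j$-conjugate. The one point needing attention is that $\widetilde{(\cdot)}$ is applied here to non-square objects — a column vector and scalars — so one should first note that the identities $\widetilde{\widetilde{M}}=M$, $\widetilde{MN}=\widetilde{M}\,\widetilde{N}$ and $(\widetilde{M})^{-1}=\widetilde{M^{-1}}$ remain valid in that generality (they are entrywise consequences of $j^{2}=1$), and that invertibility transfers from $\beta$ to $\widetilde{\beta}$ as above.
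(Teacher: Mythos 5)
Your proof is correct and is essentially the paper's own argument: the paper's one-line computation $A(\widetilde{x}\beta)=x\widetilde{\beta}\,(\widetilde{\beta})^{-1}\lambda\beta$ is the same transport of the coneigenvector by a right scalar multiple, just parametrized by $\beta$ in place of $\beta^{-1}$ (so it produces $\widetilde{\beta^{-1}}\lambda(\beta^{-1})^{-1}$, which is equivalent since $\beta$ ranges over all invertible split quaternions). You merely spell out the bookkeeping the paper leaves implicit — invertibility of $\widetilde{\beta}$, nonvanishing of the new coneigenvector, and the trivial converse via $\beta=1$.
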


\begin{proof}
\noindent From $A\widetilde{x}=x\lambda \,,$ we get $A\left( \widetilde{x}\beta  \right)=x\,\widetilde{\beta }\,\,{{\left( \widetilde{\beta } \right)}^{-1}}\lambda \beta .$
\end{proof}

\begin{theorem}
\noindent If $A\in \mathbb{H}_{S}^{n\times n}\,\,\text{and}\,\,\,\,\lambda \in {{\mathbb{H}}_{S}},$ then
${{\lambda }_{0}}$ is a right coneigenvalue of $A\,\,\Leftrightarrow \,\,j{{\lambda }_{0}}$ is a right eigenvalue of $A\,j\,\Leftrightarrow \,\,{{\lambda }_{0}}j$ is a right eigenvalue of $jA.$

\end{theorem}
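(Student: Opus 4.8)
The plan is to unwind the two definitions and simply push the factor $j$ across the defining equations, using only that $j^{2}=1$ (so that $j$, viewed as a scalar, is its own inverse) and associativity of multiplication in $\mathbb{H}_{S}$; nothing beyond Definition 3.7 is needed.

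First I would record what the hypothesis says: $\lambda_{0}$ is a right coneigenvalue of $A$ exactly when there is a nonzero $x\in\mathbb{H}_{S}^{n\times 1}$ with $A\widetilde{x}=x\lambda_{0}$, where $\widetilde{x}$ is the column whose $t$-th entry is $jx_{t}j$. Then I would note two elementary entrywise identities: because $(a_{st}j)(x_{t}j)=a_{st}jx_{t}j=a_{st}(\widetilde{x})_{t}$ we get $(Aj)(xj)=A\widetilde{x}$, and because $j^{2}=1$ we get $(xj)(j\lambda_{0})=x\lambda_{0}$. Combining them, $A\widetilde{x}=x\lambda_{0}$ holds if and only if $(Aj)(xj)=(xj)(j\lambda_{0})$. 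Since $x\mapsto xj$ is a bijection of $\mathbb{H}_{S}^{n\times 1}$ with inverse $y\mapsto yj$, carrying nonzero columns to nonzero columns, this proves at once that $\lambda_{0}$ is a right coneigenvalue of $A$ iff $j\lambda_{0}$ is a right eigenvalue of $Aj$, a corresponding eigenvector being $xj$.

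For the second equivalence I would start again from $\sum_{t}a_{st}jx_{t}j=x_{s}\lambda_{0}$ for every $s$ (the entrywise form of $A\widetilde{x}=x\lambda_{0}$), multiply on the right by $j$ and use $j^{2}=1$ to get $\sum_{t}a_{st}jx_{t}=x_{s}\lambda_{0}j$, and then multiply on the left by $j$ to get $\sum_{t}(ja_{st})(jx_{t})=(jx_{s})(\lambda_{0}j)$, which is exactly $(jA)(jx)=(jx)(\lambda_{0}j)$. Here $x\mapsto jx$ is again a bijection sending nonzero to nonzero (inverse $z\mapsto jz$), so $\lambda_{0}$ is a right coneigenvalue of $A$ iff $\lambda_{0}j$ is a right eigenvalue of $jA$, with eigenvector $jx$. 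The converses in both parts follow by reading the same chains of multiplications backwards, which is legitimate because every step only multiplies by the invertible scalar $j$.

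The one delicate point — hence the ``hard part'' — is the bookkeeping forced by noncommutativity: one must keep straight on which side $j$ sits in each of $Aj$, $jA$, $j\lambda_{0}$, $\lambda_{0}j$, $xj$, $jx$, and check the associativity rearrangements such as $(Aj)(xj)=A\widetilde{x}$ and $\sum_{t}(ja_{st})(jx_{t})=\sum_{t}ja_{st}jx_{t}$, which are routine once everything is written out entrywise. A slicker but less self-contained alternative would be to invoke that $jA$ and $Aj$ are similar (from $j^{-1}(jA)j=Aj$, as already used in the proof of the consimilarity-versus-similarity theorem), that similar matrices share their right eigenvalues, and that $j\lambda_{0}=j^{-1}(\lambda_{0}j)j$ so $j\lambda_{0}$ and $\lambda_{0}j$ lie in one similarity class of $\mathbb{H}_{S}$, right-eigenvalue sets being closed under such similarity; but the direct route above is shorter and needs nothing beyond $j^{2}=1$.
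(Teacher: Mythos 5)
Your proof is correct and follows essentially the paper's method: take the defining relation $A\widetilde{x}=x\lambda_0$ with $\widetilde{x}=jxj$ and move the factor $j$ across it using $j^{2}=1$ and associativity. The only difference is bookkeeping: the paper keeps $x$ as the eigenvector of $Aj$ and $\widetilde{x}$ as the eigenvector of $jA$, so its displayed chain actually yields ``$\lambda_0 j$ is a right eigenvalue of $Aj$'' and ``$j\lambda_0$ is a right eigenvalue of $jA$'', i.e.\ the statement with the two placements of $j$ interchanged relative to the theorem as printed, whereas you attach $j$ to the eigenvector ($xj$ for $Aj$, $jx$ for $jA$) and thereby obtain the statement verbatim. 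The two versions are reconciled exactly by the observation in your closing remark: $\lambda_0 j=j^{-1}(j\lambda_0)j$ and right eigenvalues are stable under replacing $\mu$ by $q^{-1}\mu q$ (replace the eigenvector $y$ by $yq$), so your write-up in effect also repairs this small mismatch between the paper's statement and its own proof. Your explicit note that $x\mapsto xj$ and $x\mapsto jx$ are bijections preserving nonzero columns supplies the reversibility of the equivalences that the paper leaves implicit.
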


\begin{proof}
\noindent Suppose that  ${{\lambda }_{0}}$ is right coneigenvalue of $A$. Then $0\ne x\in \mathbb{H}_{S}^{n\times n}$ so that $A\widetilde{x}=Ajxj=x{{\lambda }_{0}}\,\,\Leftrightarrow \,\,$ $Ajx=x\left( {{\lambda }_{0}}j \right)\,\Leftrightarrow {{\lambda }_{0}}j$ is a right eigenvalue of $Aj.$
Also,$A\widetilde{x}=x{{\lambda }_{0}}\,\,\,\Leftrightarrow \,\,jA\widetilde{x}=jxjj{{\lambda }_{0}}=\widetilde{x}j{{\lambda }_{0}}\,\Leftrightarrow \,\,j{{\lambda }_{0}}$ is a right eigenvalue of $jA. $

\end{proof}

\begin{definition}
\noindent (\cite{12}) Let $A={{A}_{1}}+{{A}_{2}}j\in \mathbb{H}_{S}^{n\times n}$ where ${{A}_{s} \in}$ ${\mathbb{C}^{n{\rm{x}}n}}$, $s=1,2$. The $2n\times 2n$ matrix

\[\left( {\begin{array}{*{20}{c}}
{{A_1}}&\,\,\,\,{{A_2}}\\
{\overline {{A_2}} }&\,\,\,\,{\overline {{A_1}} }
\end{array}} \right)\]

\noindent is called the complex adjoint matrix of $A$ and denoted ${{\chi }_{A}}.$
\end{definition}

\noindent It is nearby to identify a split quaternion matrix $A\in \mathbb{H}_{S}^{n\times n}$ with a complex matrix $\textbf{A}\in {{\mathbb{C}}^{2n\times n}}.$ By the $\cong $ symbol, we will define

\[A = {A_1} + {A_2}j \cong \textbf{A} = \left( {\begin{array}{*{20}{c}}
{{A_1}}\\
{{A_2}}
\end{array}} \right) \in {\mathbb{C}^{^{2n \times n}}}.\]

\noindent Then, multiplication of $A\in \mathbb{H}_{S}^{n\times n}$ and $B\in \mathbb{H}_{S}^{n\times n}$ can be shown with the help of ordinary matrix multiplication $A\,B\cong {{\left( {{\chi }_{B}} \right)}^{T}}\,\textbf{A}.$

\begin{theorem}
\noindent (\cite{12}) Let $A,B\in \mathbb{H}_{S}^{n\times n},$ then the  followings are satisfied:

\begin{description}
  \item[i.]	${{\chi }_{A+B}}={{\chi }_{A}}+{{\chi }_{B}};$
  \item[ii.] 	${{\chi }_{AB}}={{\chi }_{A}}{{\chi }_{B}};$
  \item[iii.] 	If   is  nonsingular, ${{\left( {{\chi }_{A}} \right)}^{-1}}={{\chi }_{{{A}^{-1}}}};$
  \item[iv.] 	In general ${{\chi }_{{{A}^{*}}}}\ne {{\left( {{\chi }_{A}} \right)}^{*}}$.
\end{description}

\end{theorem}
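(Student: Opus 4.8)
I would work throughout with the complex decomposition $A = A_1 + A_2 j$ and $B = B_1 + B_2 j$, where $A_s, B_s \in \mathbb{C}^{n\times n}$; since every split quaternion decomposes uniquely in this form, $\chi_A$ and $\chi_B$ are well defined. The whole argument will rest on two elementary identities: $j^2 = 1$, and $jZ = \overline{Z}\,j$ for every complex matrix $Z$ (read entrywise, this follows from $ji = -k = -\,ij$, so that $j(a+bi) = (a-bi)j$). Part (i) is then immediate: $A + B = (A_1 + B_1) + (A_2 + B_2)j$, and entrywise conjugation of complex matrices is additive, so each of the four blocks of $\chi_{A+B}$ is the blockwise sum of the corresponding blocks of $\chi_A$ and $\chi_B$.

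For part (ii) I would expand the product in the complex decomposition,
\[
AB = (A_1 + A_2 j)(B_1 + B_2 j) = A_1 B_1 + A_1 B_2 j + A_2 (jB_1) + A_2 (jB_2) j ,
\]
and then push every $j$ to the right using $jB_1 = \overline{B_1}\, j$ and $j B_2 j = \overline{B_2}\, j^2 = \overline{B_2}$, which gives
\[
AB = \bigl(A_1 B_1 + A_2 \overline{B_2}\bigr) + \bigl(A_1 B_2 + A_2 \overline{B_1}\bigr) j .
\]
On the other hand, the block product $\chi_A\chi_B$ is a $2n \times 2n$ matrix whose top blocks are $A_1 B_1 + A_2 \overline{B_2}$ and $A_1 B_2 + A_2 \overline{B_1}$, and whose bottom blocks are $\overline{A_2} B_1 + \overline{A_1}\,\overline{B_2}$ and $\overline{A_2} B_2 + \overline{A_1}\,\overline{B_1}$. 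Using $\overline{PQ} = \overline{P}\,\overline{Q}$ for complex matrices one sees that the latter two equal $\overline{A_1 B_2 + A_2 \overline{B_1}}$ and $\overline{A_1 B_1 + A_2 \overline{B_2}}$, so indeed $\chi_{AB} = \chi_A\chi_B$. The hard part, such as it is, is exactly this sign-and-conjugate bookkeeping: commuting $j$ past the complex blocks and simplifying expressions such as $\overline{A_2 \overline{B_1}} = \overline{A_2}\,B_1$; it is routine but must be carried out carefully.

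Part (iii) I would then deduce from (ii): directly from the definition $\chi_{I_n} = I_{2n}$, and applying (ii) to $A A^{-1} = A^{-1} A = I_n$ yields $\chi_A\chi_{A^{-1}} = \chi_{A^{-1}}\chi_A = I_{2n}$, hence $(\chi_A)^{-1} = \chi_{A^{-1}}$. For part (iv), since the assertion is only that equality fails in general, I would exhibit one counterexample: take $n = 1$ and $A = (j)$, so $A^* = \overline{A} = (-j)$ and $\chi_{A^*} = \left(\begin{smallmatrix} 0 & -1 \\ -1 & 0 \end{smallmatrix}\right)$, whereas $\chi_A = \left(\begin{smallmatrix} 0 & 1 \\ 1 & 0 \end{smallmatrix}\right)$ is real and symmetric, so $(\chi_A)^* = \left(\begin{smallmatrix} 0 & 1 \\ 1 & 0 \end{smallmatrix}\right) \neq \chi_{A^*}$. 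Conceptually the obstruction is that passing to $A^*$ conjugates the entries, which flips the sign of the $j$-part and therefore does not intertwine with the fixed block pattern of $\chi$ the way mere transposition would; I would perhaps add a remark pinning down the exact relation between $\chi_{A^*}$ and $(\chi_A)^*$.
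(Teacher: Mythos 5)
Your proposal is correct: the identity $jZ=\overline{Z}\,j$ together with $j^{2}=+1$ is exactly the right bookkeeping for the split-quaternion adjoint $\chi_{A}=\left(\begin{smallmatrix} A_{1} & A_{2}\\ \overline{A_{2}} & \overline{A_{1}}\end{smallmatrix}\right)$, your block computation for (ii) matches, (iii) follows as you say from (ii) and $\chi_{I_{n}}=I_{2n}$, and the $n=1$, $A=(j)$ counterexample for (iv) is valid since $A^{*}=-j$ while $\chi_{A}$ is real symmetric. The paper itself states this theorem without proof, citing reference [12]; your direct verification via the decomposition $A=A_{1}+A_{2}j$ is the standard argument used there, so there is nothing to contrast.
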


\begin{theorem}
\noindent For every $A\in \mathbb{H}_{S}^{n\times n},$
\[{\widetilde \sigma _r}\left( A \right) \cap \mathbb{C }= \widetilde \sigma \left( {{\chi _A}} \right)\]
where $\widetilde \sigma \left( {{\chi _A}} \right) = \left\{ {\lambda  \in \mathbb{C}:\,\,{\chi _A}\overline y  = \lambda y,\,\,for\,\,some\,\,y \ne 0} \right\},$ is the set of coneigenvalues of ${{\chi }_{A}}.$
\end{theorem}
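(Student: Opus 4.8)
The plan is to prove the two inclusions separately, translating the split-quaternion coneigenvalue equation into an honest complex eigenvalue equation for the complex adjoint $\chi_A$. First I would take $\lambda \in \widetilde{\sigma}_r(A) \cap \mathbb{C}$; by definition there is $0 \ne x \in \mathbb{H}_S^{n \times 1}$ with $A\widetilde{x} = x\lambda$. Writing $x = x_1 + x_2 j$ with $x_1, x_2 \in \mathbb{C}^{n \times 1}$, one has $\widetilde{x} = j x j = \overline{x_1} - \overline{x_2} j$ — wait, more carefully: $\widetilde{x} = jxj$, and since $ji = -ij$ one computes $\widetilde{x} = \overline{x_1} + \overline{x_2}\,j$ after moving the $j$'s past the complex entries (I would verify the exact signs, since the split-quaternion rule $j^2 = 1$ differs from the classical case; the identity $\overline{(\widetilde{A})} = \widetilde{(\overline{A})}$ and $\widetilde{(\widetilde{A})} = A$ from Definition~3.2 are the relevant sanity checks). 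Under the identification $x \cong \mathbf{x} = \begin{pmatrix} x_1 \\ x_2 \end{pmatrix}$, the column associated with $\widetilde{x}$ is $\overline{\mathbf{x}}$ (again up to the sign bookkeeping I need to pin down), so the equation $A\widetilde{x} = x\lambda$ becomes, via the multiplication rule $AB \cong (\chi_B)^T \mathbf{A}$ quoted just before the theorem, a relation of the form $\chi_A \overline{\mathbf{x}} = \lambda \mathbf{x}$ — using that $\lambda \in \mathbb{C}$ so that $x\lambda \cong \lambda \mathbf{x}$ directly. Since $x \ne 0$ forces $\mathbf{x} \ne 0$, this exhibits $\lambda$ as a coneigenvalue of $\chi_A$, giving $\widetilde{\sigma}_r(A) \cap \mathbb{C} \subseteq \widetilde{\sigma}(\chi_A)$.

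For the reverse inclusion I would run the same computation backwards: given $\lambda \in \mathbb{C}$ and $0 \ne y \in \mathbb{C}^{2n \times 1}$ with $\chi_A \overline{y} = \lambda y$, split $y = \begin{pmatrix} y_1 \\ y_2 \end{pmatrix}$ and reassemble the split quaternion vector $x = y_1 + y_2 j \in \mathbb{H}_S^{n \times 1}$. Then $x \ne 0$, and reversing the dictionary between $\chi_A$-multiplication and split-quaternion multiplication turns $\chi_A \overline{y} = \lambda y$ back into $A\widetilde{x} = x\lambda$, so $\lambda \in \widetilde{\sigma}_r(A)$; since $\lambda$ was complex, $\lambda \in \widetilde{\sigma}_r(A) \cap \mathbb{C}$. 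The only subtlety here is to make sure the structure of $\chi_A = \begin{pmatrix} A_1 & A_2 \\ \overline{A_2} & \overline{A_1} \end{pmatrix}$ is exactly compatible with the map $x \mapsto \widetilde{x}$ on the vector side; because $\chi_A$ has the "conjugate-symmetric" block form, $\chi_A \overline{y}$ automatically produces a vector of the same structured type, which is what lets one re-fold it into $A\widetilde{x}$.

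The main obstacle I anticipate is purely the sign/conjugation bookkeeping in the split-quaternion setting. Because here $j^2 = +1$ and $ij = -ji = k$ with $jk = -i$ (not the Hamilton relations), the formula for $\widetilde{x} = jxj$ in terms of the complex components $x_1, x_2$ may carry different signs than in the real-quaternion literature, and correspondingly the precise way $\overline{\mathbf{x}}$ (versus, say, a signed conjugate) appears on the adjoint side must be tracked exactly so that the equation $\chi_A \overline{y} = \lambda y$ in the statement is reproduced verbatim. I would therefore isolate a short computational lemma: for $x = x_1 + x_2 j$, identify the complex column vector corresponding to $\widetilde{x}$, and check it against $\overline{\mathbf{x}}$; once that identity is nailed down, both inclusions follow by a one-line substitution. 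A secondary, minor point is noting that $\lambda \in \mathbb{C}$ (rather than a general split quaternion) is exactly what makes $x\lambda$ translate to the clean left-multiplication $\lambda \mathbf{x}$ on the adjoint side — for non-complex $\lambda$ the right multiplication would not pass through the identification, which is precisely why the theorem intersects with $\mathbb{C}$.
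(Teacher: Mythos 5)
Your overall strategy (expand $A=A_1+A_2j$, $x=x_1+x_2j$ and translate $A\widetilde x = x\lambda$ into a coneigenvalue equation for $\chi_A$) is the same as the paper's, and your formula $\widetilde x=\overline{x_1}+\overline{x_2}\,j$ is correct. The gap is in the dictionary itself, and it is not a sign to be fixed later: under the plain identification $x\cong\mathbf{x}=\begin{pmatrix}x_1\\x_2\end{pmatrix}$, (i) $x\lambda$ does \emph{not} correspond to $\lambda\mathbf{x}$ even for $\lambda\in\mathbb{C}$, because $j\lambda=\overline{\lambda}j$ gives $x\lambda=\lambda x_1+\overline{\lambda}x_2\,j\cong\begin{pmatrix}\lambda x_1\\\overline{\lambda}x_2\end{pmatrix}$; and (ii) left multiplication by $A$ is not implemented by $\chi_A$ on $\mathbf{x}$ (it is not even $\mathbb{C}$-linear in $\mathbf{x}$). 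Writing out the equation gives $A_1\overline{x_1}+A_2x_2=\lambda x_1$ and $A_1\overline{x_2}+A_2x_1=\overline{\lambda}x_2$; conjugating the second, the correct packaging is
\[
\chi_A\begin{pmatrix}\overline{x_1}\\ x_2\end{pmatrix}=\lambda\begin{pmatrix}x_1\\ \overline{x_2}\end{pmatrix},
\]
so the coneigenvector of $\chi_A$ is $y=\begin{pmatrix}x_1\\ \overline{x_2}\end{pmatrix}$, with the \emph{second block conjugated}, not $\mathbf{x}$ itself; your claimed relation $\chi_A\overline{\mathbf{x}}=\lambda\mathbf{x}$ is false in general (its first row would require $A_2\overline{x_2}$ where the true equation has $A_2x_2$). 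Your proposed sanity-check lemma would not catch this: the column of $\widetilde x$ really is $\overline{\mathbf{x}}$, so that check passes while the actual defect lies in how $A\cdot(\ )$ and $(\ )\cdot\lambda$ behave under the identification. Likewise, the quoted rule $AB\cong(\chi_B)^T\mathbf{A}$ is the wrong tool here: applied to $A\widetilde x$ it would produce $(\chi_{\widetilde x})^T\mathbf{A}$, not an equation featuring $\chi_A$.

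The repair is exactly the paper's computation: use the twisted identification $x\mapsto y=\begin{pmatrix}x_1\\ \overline{x_2}\end{pmatrix}$, under which $\widetilde x\mapsto\overline{y}$, $x\lambda\mapsto\lambda y$ for $\lambda\in\mathbb{C}$, and $A\widetilde x\mapsto\chi_A\overline{y}$, so $A\widetilde x=x\lambda$ becomes $\chi_A\overline{y}=\lambda y$ verbatim. With that dictionary both of your inclusions do follow by the one-line substitutions you describe, and your explicit converse (reassembling $x=y_1+\overline{y_2}\,j$ from a coneigenvector $y$ of $\chi_A$, noting $y\ne0\Rightarrow x\ne0$) is actually more complete than the paper's proof, which writes out only the forward direction and asserts the equivalence.
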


\begin{proof}
\noindent Let $A={{A}_{1}}+{{A}_{2}}j\in \mathbb{H}_{S}^{n\times n}$ such that ${{A}_{s} \in}$ ${\mathbb{C}^{n{\rm{x}}n}}$, $s=1,2$,  and $\lambda \in \mathbb{C}$ be a right coneigenvalue of $A.$ Therefore there exists nonzero column vector $x\in \mathbb{H}_{S}^{n\times 1}$ such that $A\widetilde{x}=x\lambda .$ This implies

\[\begin{array}{l}
\left( {{A_1} + {A_2}j} \right)\left( {\overline {{x_1}}  + \overline {{x_2}} j} \right) = \left( {{x_1} + {x_2}j} \right)\lambda \\
\left( {A\overline {{x_1}}  + {A_2}{x_2}} \right) = {x_1}\lambda \,\,\,\,\,\,\text{and}\,\,\,\,\,\left( {{A_2}{x_1} + {A_1}\overline {{x_2}} } \right) = {x_2}\overline \lambda
\end{array}\]
Using these equations, we can write
\[\left( {\begin{array}{*{20}{c}}
{{A_1}}&\,\,\,\,{{A_2}}\\
{\overline {{A_2}} }&\,\,\,\,{\overline {{A_1}} }
\end{array}} \right)\left( {\begin{array}{*{20}{c}}
{\overline {{x_1}} }\\
{{x_2}}
\end{array}} \right) = \lambda \left( {\begin{array}{*{20}{c}}
{{x_1}}\\
{\overline {{x_2}} }
\end{array}} \right).\]
\noindent Therefore, the complex right coneigenvalue of the split quaternion matrix $A$ is an equivalent to the coneigenvalue of the adjoint matrix ${{\chi }_{A}}$ that is $\widetilde {{\sigma _r}}\left( A \right) \cap \mathbb{C} = \widetilde \sigma \left( {{\chi _A}} \right).$

\end{proof}

\section{Real Representation of Split Quaternion Matrices}

\noindent Let $A={{A}_{0}}+{{A}_{1}}i+{{A}_{2}}j+{{A}_{3}}k\in \mathbb{H}_{S}^{m\times n}$ where ${{A}_{s}}$ are $m\times n$ real matrices, $s=0,1,2,3$. We define the linear transformation ${{\phi }_{A}}\left( X \right)=A\widetilde{X}.$ Then, we can write

\[\begin{array}{l}
{\phi _A}\left( 1 \right) = A = {A_0} + {A_1}i + {A_2}j + {A_3}k\\
{\phi _A}\left( i \right) = A\,\widetilde i = {A_1} - {A_0}i - {A_3}j + {A_2}k\\
{\phi _A}\left( j \right) = A\,\widetilde j = {A_2} + {A_3}i + {A_0}j + {A_1}k\\
{\phi _A}\left( k \right) = A\,\widetilde k =  - {A_3} + {A_2}i + {A_1}j - {A_0}k.
\end{array}\]

\noindent Then, we find the real representation of the split quaternion matrix $A$ as follows:

\[{\phi _A} = \left( {\begin{array}{*{20}{c}}
{{A_0}}&\,\,\,\,{{A_1}}&\,\,\,\,{{A_2}}&{ - {A_3}}\\
{{A_1}}&{ - {A_0}}&\,\,\,\,{{A_3}}&\,\,\,\,{{A_2}}\\
{{A_2}}&{ - {A_3}}&\,\,\,\,{{A_0}}&\,\,\,\,{{A_1}}\\
{{A_3}}&\,\,\,\,{{A_2}}&\,\,\,\,{{A_1}}&{ - {A_0}}
\end{array}} \right) \in {\mathbb{R}^{4m \times 4n}}.\]

\noindent It is nearby to identify a split quaternion matrix $A\in \mathbb{H}_{S}^{m\times n}$ with a real matrix $\textbf{A}\in {{\mathbb{R}}^{4m\times n}}.$
By the $\cong $ symbol, we will define

\[A = {A_0} + {A_1}i + {A_2}j + {A_3}k \cong \textbf{A} = \left( {\begin{array}{*{20}{c}}
{{A_0}}\\
{{A_1}}\\
{{A_2}}\\
{{A_3}}
\end{array}} \right) \in {\mathbb{R}^{^{4m \times n}}}.\]

\noindent Then, the multiplication of $A\in \mathbb{H}_{S}^{m\times n}$ and $\widetilde{B}\in {{\mathbb{H}}^{n\times k}}$ can be shown with the help of ordinary matrix multiplication $A\,\widetilde{B}\cong {{\phi }_{A}}\,\textbf{B}.$

\begin{theorem}
\noindent For the split quaternion matrix $A$, the following identities are satisfied:

\begin{description}
  \item[i.] If $A\in \mathbb{H}_{S}^{m\times n},$ then \[P_m^{ - 1}{\phi _A}{P_n} = {\phi _{\widetilde A}}\,,\,\,\,Q_m^{ - 1}{\phi _A}{Q_n} =  - {\phi _A},\,\,\,\,\,R_m^{ - 1}{\phi _A}{R_n} = {\phi _A},\,\,\,\,\,\,S_m^{ - 1}{\phi _A}{S_n} =  - {\phi _A};\]

\noindent where
  \[\begin{array}{l}
{P_m} = \left( {\begin{array}{*{20}{c}}
{{I_m}}&\,\,\,\,0&\,\,\,\,0&\,\,\,\,0\\
0&{ - {I_m}}&\,\,\,\,0&\,\,\,\,0\\
0&\,\,\,\,0&\,\,\,\,{{I_m}}&\,\,\,\,0\\
0&\,\,\,\,0&\,\,\,\,0&{ - {I_m}}
\end{array}} \right),\,\,\,\,\,\,\,\,\,\,\,\,\,\,\,\,\,\,\,\,\,\,\,\,\,\,\,\,\,{Q_m} = \left( {\begin{array}{*{20}{c}}
0&{ - {I_m}}&\,\,\,\,0&\,\,\,\,0\\
{{I_m}}&\,\,\,\,0&\,\,\,\,0&\,\,\,\,0\\
0&\,\,\,\,0&\,\,\,\,0&\,\,\,\,{{I_m}}\\
0&\,\,\,\,0&{ - {I_m}}&\,\,\,\,0
\end{array}} \right),\\
\,\\
{R_m} = \left( {\begin{array}{*{20}{c}}
\,\,\,\,0&\,\,\,\,0&{ - {I_m}}&\,\,\,\,0\\
\,\,\,\,0&\,\,\,\,0&\,\,\,\,0&{ - {I_m}}\\
{ - {I_m}}&\,\,\,\,0&\,\,\,\,0&\,\,\,\,0\\
\,\,\,\,0&{ - {I_m}}&\,\,\,\,0&\,\,\,\,0
\end{array}} \right),\,\,\,\,\,\,\,\,\,\,\,\,\,\,\,\,\,\,\,\,\,{S_m} = \left( {\begin{array}{*{20}{c}}
\,\,\,\,0&\,\,\,\,0&\,\,\,\,0&\,\,\,\,{{I_m}}\\
\,\,\,\,0&\,\,\,\,0&{ - {I_m}}&\,\,\,\,0\\
\,\,\,\,0&{ - {I_m}}&\,\,\,\,0&\,\,\,\,0\\
\,\,\,\,{{I_m}}&\,\,\,\,0&\,\,\,\,0&\,\,\,\,0
\end{array}} \right),
\end{array}\]

  \item[ii.] If $A,B\in \mathbb{H}_{S}^{m\times n},$ then  ${{\phi }_{A+B}}={{\phi }_{A}}+{{\phi }_{B}};$
  \item[iii.] If $A\in \mathbb{H}_{S}^{m\times n},\,\,B\in \mathbb{H}_{S}^{n\times r},$ in that case ${{\phi }_{AB}}={{\phi }_{A}}{{P}_{n}}{{\phi }_{B}}={{\phi }_{A}}{{\phi }_{\widetilde{B}}}{{P}_{r}};$
  \item[iv.] If $A\in \mathbb{H}_{S}^{m\times m},\,\,$ in that case $A$ is nonsingular necessary and sufficient condition ${{\phi }_{A}}$ is nonsingular, $\phi _{A}^{-1}=P_{m}{{\phi }_{{{A}^{-1}}}}P_{m}$;
  \item[v.] If  $A\in \mathbb{H}_{S}^{m\times m},\,$ ${{\phi }_{\overline{A}}}={{\varepsilon }_{2}}{{\left( {{\phi }_{A}} \right)}^{T}}{{\varepsilon }_{2}}$ where ${\varepsilon _2} = \left( {\begin{array}{*{20}{c}}
{{I_m}}&0&0&0\\
0&{ - {I_m}}&0&0\\
0&0&{ - {I_m}}&0\\
0&0&0&{{I_m}}
\end{array}} \right);$
  \item[vi.] If $A\in \mathbb{H}_{S}^{m\times m},$ \[{\widetilde \sigma _r}\left( A \right) \cap \mathbb{C} = \sigma \left( {{\phi _A}} \right)\]
where $\sigma \left( {{\phi _A}} \right) = \left\{ {\lambda  \in \mathbb{C}:\,\,{\phi _A}y = \lambda y,\,\,for\,\,some\,\,y \ne 0} \right\}$  is the set of eigenvalues of ${{\phi }_{A}}.$
\end{description}

\end{theorem}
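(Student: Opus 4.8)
The plan is to view $\phi_A$ as the matrix, in the ordered real basis $1,i,j,k$ of $\mathbb{H}_S$, of the $\mathbb{R}$-linear operator $X\mapsto A\widetilde X$ on split quaternion matrices of compatible size. Everything then flows from the single defining relation $A\widetilde X\cong\phi_A\mathbf X$ (for every $X$) together with the fact that $Y\mapsto\mathbf Y$ is an $\mathbb{R}$-linear bijection, so that two real $4m\times 4n$ matrices coincide once they agree on all $\mathbf X$, $X\in\mathbb{H}_S^{n\times1}$. I would first record three companion identifications, each obtained by evaluating on $1,i,j,k$: $\widetilde X\cong P_n\mathbf X$ (in particular $\phi_{I_n}=P_n$ and $P_n^2=I$), $Xi\cong Q_n\mathbf X$, $Xj\cong-R_n\mathbf X$, $Xk\cong S_n\mathbf X$; and I would use freely the $j$-conjugate rules $\widetilde{\widetilde X}=X$, $\widetilde{UV}=\widetilde U\widetilde V$, $\widetilde 1=1$, $\widetilde i=-i$, $\widetilde j=j$, $\widetilde k=-k$ from Section~3.

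Parts (i)--(iv) are then short. (ii) is immediate from $(A+B)\widetilde X=A\widetilde X+B\widetilde X$. For (iii) I would chase identifications: $\phi_B\mathbf X\cong B\widetilde X$, hence $P_n\phi_B\mathbf X\cong\widetilde{(B\widetilde X)}=\widetilde B X$, hence $\phi_A(P_n\phi_B\mathbf X)\cong A\widetilde{(\widetilde B X)}=AB\widetilde X\cong\phi_{AB}\mathbf X$, so $\phi_{AB}=\phi_A P_n\phi_B$; the equivalent form $\phi_{AB}=\phi_A\phi_{\widetilde B}P_r$ then follows by substituting the $P$-identity of (i). For (i) itself I would either multiply out the four block identities directly against the explicit sign/permutation matrices, or---more transparently---read them off the companion identifications, e.g. $P_m^{-1}\phi_A P_n\mathbf X\cong\widetilde{(A\widetilde{\widetilde X})}=\widetilde A\widetilde X=\phi_{\widetilde A}\mathbf X$ and $Q_m^{-1}\phi_A Q_n\mathbf X\cong\bigl(A\widetilde{(Xi)}\bigr)i^{-1}=\bigl(A\widetilde X\,\widetilde i\bigr)i^{-1}=-A\widetilde X=-\phi_A\mathbf X$, the $R$- and $S$-cases being handled the same way using $\widetilde j=j$, $\widetilde k=-k$, $j^2=k^2=1$. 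For (iv), taking $B=A^{-1}$ in (iii) gives $P_m=\phi_{I_m}=\phi_A P_m\phi_{A^{-1}}$, which (using $P_m^2=I$) rearranges to $\phi_A\bigl(P_m\phi_{A^{-1}}P_m\bigr)=I$, so $\phi_A$ is invertible with $\phi_A^{-1}=P_m\phi_{A^{-1}}P_m$; conversely, if $\phi_A$ is nonsingular then $X\mapsto A\widetilde X$, hence $X\mapsto AX$, is a bijection of $\mathbb{H}_S^{m\times1}$, so $AB=I_m$ for some $B$, and injectivity applied to $A(BA-I_m)=0$ forces $BA=I_m$. Part (v) is a routine $4\times4$ block computation comparing the explicit forms of $\phi_{\overline A}$ and $\varepsilon_2(\phi_A)^T\varepsilon_2$.

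The substantial part is (vi). Writing $\lambda=\lambda_0+\lambda_1 i\in\mathbb{C}$, one has $x\lambda=\lambda_0 x+\lambda_1(xi)$, so by $A\widetilde x\cong\phi_A\mathbf x$ and $xi\cong Q_m\mathbf x$ the coneigenvalue equation $A\widetilde x=x\lambda$ with $x\neq0$ becomes $(\phi_A-\lambda_0 I-\lambda_1 Q_m)\mathbf x=0$ with $\mathbf x\in\mathbb{R}^{4m}\setminus\{0\}$; thus $\lambda\in\widetilde\sigma_r(A)\cap\mathbb{C}$ iff the real matrix $\phi_A-\lambda_0 I-\lambda_1 Q_m$ is singular. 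To match this with $\lambda\in\sigma(\phi_A)$ I would use that the explicit $Q_m$ of part (i) satisfies $Q_m^2=-I_{4m}$ and that part (i) gives $Q_m\phi_A=-\phi_A Q_m$: diagonalizing $Q_m$ over $\mathbb{C}$ (eigenvalues $\pm i$, each of multiplicity $2m$) makes $\phi_A$ block-antidiagonal with respect to the $\pm i$-eigenspaces and turns $\lambda_0 I+\lambda_1 Q_m$ into $\diag(\lambda I,\overline\lambda I)$, and comparing the singularity of the two resulting bordered block matrices---using that complex conjugation interchanges the two eigenspaces, so the off-diagonal blocks are conjugates of one another---yields the identity.

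This last step is the main obstacle. Because of the $Q_m$-twist, $\lambda$ enters the real equation through $\lambda_0 I+\lambda_1 Q_m$ rather than through $\lambda I$, and one must also keep in mind that, unlike ordinary eigenvalues, the coneigenvalues come in whole modulus-circles $\{e^{it}\lambda\}$ (already visible in Theorem~3.13), so the stated equality must be read with that symmetry understood; an alternative is to bypass the real form entirely, reduce to Theorem~3.13, and compare the spectrum of $\chi_A\overline{\chi_A}$ with that of $\phi_A^{\,2}$. Apart from (vi), the whole theorem is mechanical once the dictionary $A\widetilde X\cong\phi_A\mathbf X$, $\widetilde X\cong P_n\mathbf X$, $Xi\cong Q_n\mathbf X$ has been set up.
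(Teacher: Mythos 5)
Your treatment of (i)--(iv) is correct, and your dictionary $A\widetilde X\cong\phi_A\mathbf X$, $\widetilde X\cong P_n\mathbf X$, $Xi\cong Q_n\mathbf X$, $Xj\cong -R_n\mathbf X$, $Xk\cong S_n\mathbf X$ is a clean way to obtain them; the paper simply declares the first five parts easy and proves none of them, so there you are more complete than the source. One warning about (v): if you actually do the block computation you will find $\varepsilon_2(\phi_A)^T\varepsilon_2=\phi_{A^*}$, i.e.\ the blocks come out transposed, so the printed identity with $\overline A$ holds only when the $A_s$ are symmetric --- the ``routine verification'' will not confirm the statement as written (a defect of the paper's statement, not of your method).

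The genuine gap is (vi), and it is not closable in the form you set up. Your translation of $A\widetilde x=x\lambda$ into $(\phi_A-\lambda_0I-\lambda_1Q_m)\mathbf x=0$ is the correct one, but the equivalence you still owe --- singularity of $\phi_A-\lambda_0I-\lambda_1Q_m$ iff $\lambda\in\sigma(\phi_A)$ --- is false for nonreal $\lambda$. Your own sketch shows why: in a complex eigenbasis of $Q_m$ the first condition becomes singularity of $\left(\begin{smallmatrix}-\lambda I&M\\ N&-\overline\lambda I\end{smallmatrix}\right)$, i.e.\ $|\lambda|^2\in\sigma(NM)$, a condition depending only on $|\lambda|$, whereas $\lambda\in\sigma(\phi_A)$ means $\lambda^2\in\sigma(NM)$. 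Concretely, for the $1\times1$ matrix $A=k$ the equation $k\widetilde x=xi$ forces $x=0$, so $i\notin\widetilde\sigma_r(k)$, yet $\sigma(\phi_k)=\{i,-i\}$; conversely for $A=1$ one has $\sigma(\phi_1)=\{1,-1\}$ while every unit complex number is a right coneigenvalue of $1$ (e.g.\ $x=1-i$ gives $\lambda=i$, and Theorem 3.8 then yields the whole circle). So the asserted set equality fails in both directions; what your argument does prove is the real part, $\widetilde\sigma_r(A)\cap\mathbb{R}=\sigma(\phi_A)\cap\mathbb{R}$, and the general statement can only be salvaged if read modulo the equivalence $\lambda\mapsto\widetilde\beta\lambda\beta^{-1}$ that you noticed. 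For comparison, the paper's entire proof of (vi) is the line ``$\phi_A\mathbf x=\mathbf x\lambda$'', which silently identifies the real column representing $x\lambda$ with $\lambda\mathbf x$ --- exactly the step that is valid only for real $\lambda$ and that your more careful $Q_m$-translation exposes. So the obstacle you flagged is real: the missing step cannot be supplied, and a correct write-up must either restrict to real $\lambda$ or restate (vi) (your fallback via $\chi_A\overline{\chi_A}$ versus $\phi_A^2$ likewise only gives a correspondence of moduli, not the literal equality).
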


\begin{proof}
\noindent The first five statements can be seen in an easy way. Thus, we will prove  vi.

\noindent Let $A={{A}_{0}}+{{A}_{1}}i+{{A}_{2}}j+{{A}_{3}}k\in \mathbb{H}_{S}^{m\times m}$   where ${{A}_{s} \in}$ ${\mathbb{R}^{m{\rm{x}}m}}$ for $s=0,1,2,3$, and $\lambda \in \mathbb{C}$ be a right coneigenvalue of $A.$ Then, there exists nonzero column vector $x\in \mathbb{H}_{S}^{m\times 1}$ such that $A\widetilde{x}=x\lambda .$ We can write ${{\phi }_{A}}\,\textbf{x}=\textbf{x}\lambda .$ Then complex right coneigenvalue of split quaternion matrix $A$ is an equivalent to the eigenvalue of ${{\phi }_{A}}$ that is ${\widetilde \sigma _r}\left( A \right) \cap \mathbb{C} = \sigma \left( {{\phi _A}} \right).$

\end{proof}

\section{The Split Quaternion Matrix Equation $X-A\widetilde{X}B=C$}

\noindent In this part, we take into consideration the split quaternion matrix equation

\begin{equation}
X-A\widetilde{X}B=C
\end{equation}

\noindent through the real representation, where $A\in \mathbb{H}_{S}^{m\times m},\,\,B\in \mathbb{H}_{S}^{n\times n}$ and $C\in \mathbb{H}_{S}^{m\times n}.$

\noindent We define the real representation matrix equation of the split quaternion matrix equation (12) by

\begin{equation}
Y - {\phi _A}Y{\phi _B} = {\phi _C}
\end{equation}

\noindent By (iii.) in Theorem 4.1, the equation (12) is equivalent to the equation
\begin{equation}
{{\phi }_{X}}-{{\phi }_{A}}{{\phi }_{X}}{{\phi }_{B}}={{\phi }_{C}}.
\end{equation}

\begin{theorem}
\noindent The split quaternion matrix equation (12) has a solution $X$ necessary and sufficient condition real matrix equation (13) has a solution $Y={{\phi }_{X}}.$
\end{theorem}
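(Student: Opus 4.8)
The plan is to read the statement off from the equivalence between (12) and (14) that is recorded just above the theorem, so that the proof amounts to unwinding the identification $X \mapsto \phi_X$ together with the functorial properties of the real representation collected in Theorem 4.1. Concretely, the whole argument rests on the identity $\phi_{A\widetilde X B} = \phi_A \phi_X \phi_B$ plus injectivity of the map $Z \mapsto \phi_Z$.

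For the "only if" direction I would assume $X \in \mathbb{H}_S^{m\times n}$ satisfies $X - A\widetilde X B = C$ and apply the representation to both sides. By additivity (Theorem 4.1 (ii)) it suffices to understand $\phi_{A\widetilde X B}$; combining Theorem 4.1 (iii) applied twice (first to $A\cdot(\widetilde X B)$, then to $\widetilde X\cdot B$), Theorem 4.1 (i) in the form $\phi_{\widetilde X} = P_m^{-1}\phi_X P_n$, and $P_n^2 = I_{4n}$, one gets $\phi_{A\widetilde X B} = \phi_A P_m P_m^{-1}\phi_X P_n P_n \phi_B = \phi_A \phi_X \phi_B$. Hence $\phi_X - \phi_A \phi_X \phi_B = \phi_C$, i.e.\ $Y = \phi_X$ solves the real matrix equation (13).

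For the "if" direction I would assume (13) has a solution of the special form $Y = \phi_X$ with $X \in \mathbb{H}_S^{m\times n}$. Then $\phi_X - \phi_A \phi_X \phi_B = \phi_C$, and by the same identity this reads $\phi_{X - A\widetilde X B} = \phi_C$. Since $Z \mapsto \phi_Z$ is injective on $\mathbb{H}_S^{m\times n}$ (the four real blocks of $\phi_Z$ recover $Z_0, Z_1, Z_2, Z_3$), we conclude $X - A\widetilde X B = C$, so $X$ solves (12).

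I do not expect a genuine obstacle: the only nontrivial ingredient is the identity $\phi_{A\widetilde X B} = \phi_A \phi_X \phi_B$ together with the faithfulness of $\phi$, and the former is precisely the reduction "(12) $\Leftrightarrow$ (14)" stated before the theorem. The statement is therefore bookkeeping on top of Theorem 4.1; the point of the formulation is that solvability of the original split quaternion equation is detected exactly by the existence of a $\phi$-structured solution of the real equation, which is what the constraint $Y = \phi_X$ encodes and what the subsequent explicit-solution discussion will exploit.
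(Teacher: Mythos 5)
Your proposal is correct and follows the same route as the paper: the paper proves Theorem 5.1 only implicitly, by remarking that equation (12) is equivalent to (14) via Theorem 4.1 (iii), which is exactly your identity $\phi_{A\widetilde X B}=\phi_A\phi_X\phi_B$ obtained from (i), (iii) and $P_n^2=I$, combined with the faithfulness of $Z\mapsto\phi_Z$. You simply spell out the bookkeeping the paper leaves to the reader.
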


\begin{theorem}
\noindent Let $A\in \mathbb{H}_{S}^{m\times m},\,\,B\in \mathbb{H}_{S}^{n\times n}$ and $C\in \mathbb{H}_{S}^{m\times n}.$ The split quaternion matrix equation (12) has a solution $X\in \mathbb{H}_{S}^{m\times n}$ necessary and sufficient condition the matrix equation (13) has a solution  $Y\in {{\mathbb{R}}^{4m\times 4n}}$. In this case, if $Y$ is a solution to the matrix equation (13), we have
\begin{equation}
 X = \frac{1}{{16}}\left( {{I_m}\,\,\,\,i{I_m}\,\,\,j{I_m}\,\,\,k{I_m}} \right)\left( {Y - Q_m^{ - 1}Y{Q_n} + R_m^{ - 1}Y{R_n} - S_m^{ - 1}Y{S_n}} \right)\left( {\begin{array}{*{20}{c}}
{{I_m}}\\
{i{I_m}}\\
{j{I_m}}\\
{ - k{I_m}}
\end{array}} \right)
\end{equation}

\noindent is a solution to (12).

\end{theorem}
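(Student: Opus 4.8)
The plan is to prove the two implications separately; the substance lies in the ``if'' direction together with formula (15). Throughout let $u$ and $v$ denote the row and column matrices appearing in (15), so that $u\in\mathbb{H}_S^{m\times 4m}$, $v\in\mathbb{H}_S^{4n\times n}$, and record the elementary identity $u\,\phi_W\,v=4W$ for every $W\in\mathbb{H}_S^{m\times n}$, which is a direct block computation from the definition of $\phi_W$ and the multiplication table (3). For the necessity, if $X$ solves (12) then, combining Theorem 4.1 (ii), (iii) with the relation $P_m^{-1}\phi_AP_n=\phi_{\widetilde A}$ and $P_m^2=I$, one gets $\phi_{A\widetilde XB}=\phi_A\phi_X\phi_B$, so that (12) is equivalent to (14) and $Y=\phi_X\in\mathbb{R}^{4m\times 4n}$ solves (13); this is Theorem 5.1.

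For sufficiency and the formula, let $Y\in\mathbb{R}^{4m\times 4n}$ solve (13). First I would generate three more solutions by conjugating the equation $Y-\phi_AY\phi_B=\phi_C$. Conjugating by $(Q_m^{-1}\,\cdot\,Q_n)$ and invoking Theorem 4.1 (i) ($Q_m^{-1}\phi_AQ_m=-\phi_A$, $Q_n^{-1}\phi_BQ_n=-\phi_B$, $Q_m^{-1}\phi_CQ_n=-\phi_C$) gives $Q_m^{-1}YQ_n-\phi_A(Q_m^{-1}YQ_n)\phi_B=-\phi_C$, so $-Q_m^{-1}YQ_n$ is a solution; likewise $R_m^{-1}YR_n$ and $-S_m^{-1}YS_n$ are solutions. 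Since the map $W\mapsto W-\phi_AW\phi_B$ is linear, the matrix
\[
\bar Y:=Y-Q_m^{-1}YQ_n+R_m^{-1}YR_n-S_m^{-1}YS_n
\]
satisfies $\bar Y-\phi_A\bar Y\phi_B=4\phi_C$, so $\tfrac14\bar Y$ solves (13).

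The heart of the argument is to show that $\bar Y=\phi_{W_0}$ for some $W_0\in\mathbb{H}_S^{m\times n}$. Put $T_Q(M)=Q_m^{-1}MQ_n$, $T_R(M)=R_m^{-1}MR_n$, $T_S(M)=S_m^{-1}MS_n$ on $\mathbb{R}^{4m\times 4n}$. From $Q_m^2=-I$, $R_m^2=I$ we get $T_Q^2=T_R^2=\mathrm{id}$, and the block identities $Q_mR_m=S_m$, $R_mQ_m=-S_m$ give $T_QT_R=T_RT_Q=T_S$; hence
\[
\mathcal{P}:=\tfrac14\bigl(I-T_Q+T_R-T_S\bigr)=\tfrac14(I-T_Q)(I+T_R)
\]
is idempotent. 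Its trace is $\tfrac14\bigl(16mn-\operatorname{tr}T_Q+\operatorname{tr}T_R-\operatorname{tr}T_S\bigr)=4mn$, because $Q_m,R_m,S_m$ (and the corresponding $Q_n,R_n,S_n$) have zero diagonal, so $T_Q,T_R,T_S$ are traceless; thus $\operatorname{rank}\mathcal{P}=4mn$. On the other hand Theorem 4.1 (i) gives $T_Q\phi_W=-\phi_W$ and $T_R\phi_W=\phi_W$, so $\mathcal{P}\phi_W=\phi_W$; hence the $4mn$-dimensional subspace $\{\phi_W:W\in\mathbb{H}_S^{m\times n}\}$ lies in $\operatorname{Image}\mathcal{P}$, and by equality of dimensions the two coincide. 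Therefore $\bar Y=4\mathcal{P}(Y)=\phi_{W_0}$ for some $W_0$; then $\tfrac14\bar Y=\phi_{W_0/4}$ solves (13), so by the equivalence of (12) and (14) the split quaternion matrix $W_0/4$ solves (12), and by $u\,\phi_{W_0}\,v=4W_0$ this matrix equals $\tfrac1{16}\,u\,\bar Y\,v$, which is exactly the $X$ displayed in (15).

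The main obstacle is precisely this middle step: confirming that the symmetrization $\bar Y$ really has real-representation form $\phi_{W_0}$. Everything else is manipulation with the identities of Theorem 4.1. I would settle it as above, through the idempotent $\mathcal{P}$ and the rank/dimension count that identifies $\operatorname{Image}\mathcal{P}$ with the range of $W\mapsto\phi_W$; alternatively it can be done by an explicit but more laborious block-by-block evaluation of $\bar Y$ showing that it matches the pattern of a real representation matrix.
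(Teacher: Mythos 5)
Your proposal is correct, and its skeleton coincides with the paper's: use Theorem 4.1(i) to conjugate the equation and obtain that $-Q_m^{-1}YQ_n$, $R_m^{-1}YR_n$, $-S_m^{-1}YS_n$ also solve (13), average by linearity, show the averaged matrix is a real representation $\phi_X$, and then invoke Theorem 5.1 (equivalence of (12) and (14)) to conclude. Where you genuinely diverge is the central verification that the symmetrized matrix lies in the range of $\phi$: the paper does this by an explicit block computation, writing $Y$ in $4\times 4$ block form and checking that $Y'=\tfrac14\bigl(Y-Q_m^{-1}YQ_n+R_m^{-1}YR_n-S_m^{-1}YS_n\bigr)$ has exactly the representation pattern, which yields the explicit block formulas (18) for $Y'_0,\dots,Y'_3$ and hence for $X$; you instead observe that the averaging operator $\mathcal{P}=\tfrac14(I-T_Q)(I+T_R)$ is an idempotent (using $T_Q^2=T_R^2=\mathrm{id}$, $T_QT_R=T_RT_Q=T_S$), compute $\operatorname{tr}\mathcal{P}=4mn$ so $\operatorname{rank}\mathcal{P}=4mn$, and conclude by a dimension count that $\operatorname{Image}\mathcal{P}$ equals the $4mn$-dimensional space $\{\phi_W\}$, which it contains by Theorem 4.1(i). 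You also make explicit the extraction identity $u\,\phi_W\,v=4W$ (with $u=(I_m\ iI_m\ jI_m\ kI_m)$ and $v$ the column in (15), whose last block should be $-kI_n$), which the paper uses only implicitly when it writes $X=\tfrac14\,u\,Y'\,v$. Your route buys a cleaner, computation-free identification of the image of the symmetrization at the cost of the rank-equals-trace fact for idempotents; the paper's block computation is more laborious but delivers the explicit component formulas (18) directly. Both arguments are sound, and your checks (that the $Q,R,S$ conjugations flip or preserve the sign of $\phi_C$, and that $P_n^2=I$ gives $\phi_{A\widetilde XB}=\phi_A\phi_X\phi_B$ for the necessity direction) are the right ones.
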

\begin{proof}
\noindent We show that if the real matrix

\[Y = \left( {\begin{array}{*{20}{c}}
{{Y_{11}}}&\,\,\,{{Y_{12}}}&\,\,\,{{Y_{13}}}&\,\,\,{{Y_{14}}}\\
{{Y_{21}}}&\,\,\,{{Y_{22}}}&\,\,\,{{Y_{23}}}&\,\,\,{{Y_{24}}}\\
{{Y_{31}}}&\,\,\,{{Y_{32}}}&\,\,\,{{Y_{33}}}&\,\,\,{{Y_{34}}}\\
{{Y_{41}}}&\,\,\,{{Y_{42}}}&\,\,\,{{Y_{43}}}&\,\,\,{{Y_{44}}}
\end{array}} \right),\,\,{Y_{uv}} \in {\mathbb{R}^{m \times n}},\,u,v = 1,2,3,4\]

\noindent is a solution to (13), the matrix represented in (15) is a solution to (12). Since\\  $\,Q_{m}^{-1}Y{{Q}_{n}}=-Y,\,\,R_{m}^{-1}Y{{R}_{n}}=Y,$ and $S_{m}^{-1}Y{{S}_{n}}=-Y,$ we have

\begin{equation}
\begin{array}{l}
- Q_m^{ - 1}Y{Q_n} - {\phi _A}\left( { - Q_m^{ - 1}Y{Q_n}} \right){\phi _B} = {\phi _C}\\
\,\,\,\,\,R_m^{ - 1}Y{R_n}\, - {\phi _A}\left(\, {\,R_m^{ - 1}Y{R_n}\,} \right){\phi _B}\, = {\phi _C}\\
 - S_m^{ - 1}Y{S_n}\,\, - \,{\phi _A}\left( { - S_m^{ - 1}Y{S_n}} \right){\phi _B} = {\phi _C}.
\end{array}
\end{equation}

\noindent Last equations  show that if $Y$ is a solution to (13), then $\,-Q_{m}^{-1}Y{{Q}_{n}},\,\,\,\,\,R_{m}^{-1}Y{{R}_{n}}$ and $S_{m}^{-1}Y{{S}_{n}}$ are also solutions to (13). Then the undermentioned real matrix:

\begin{equation}
Y' = \frac{1}{4}\left( {Y - \,Q_m^{ - 1}Y\,{Q_n} + R_m^{ - 1}Y\,{R_n} - \,S_m^{ - 1}Y\,{S_n}} \right)
\end{equation}

\noindent is a solution to (13). After calculating, we easily obtain

\[Y' = \left( {\begin{array}{*{20}{c}}
{{Y'_0}}&\,\,\,\,{{Y'_1}}&\,\,\,\,{{Y'_2}}&{ - {Y'_3}}\\
{{Y'_1}}&{ - {Y'_0}}&\,\,\,\,{{Y'_3}}&\,\,\,\,{{Y'_2}}\\
{{Y'_2}}&{ - {Y'_3}}&\,\,\,\,{{Y'_0}}&\,\,\,\,{{Y'_1}}\\
{{Y'_3}}&\,\,\,\,{{Y'_2}}&\,\,\,\,{{Y'_1}}&{ - {Y'_0}}
\end{array}} \right),\]

\noindent where

\begin{equation}
\begin{array}{l}
{Y'_0} = \frac{1}{4}\left( {{Y_{11}} - {Y_{22}} + {Y_{33}} - {Y_{44}}} \right),\,\,\,\,{Y'_1} = \frac{1}{4}\left( {{Y_{12}} + {Y_{21}} + {Y_{34}} + {Y_{43}}} \right),\,\,\,\\
\,\\
{Y'_2} = \frac{1}{4}\left( {{Y_{13}} + {Y_{24}} + {Y_{31}} + {Y_{42}}} \right),\,\,\,\,{Y'_3} = \frac{1}{4}\left( { - {Y_{14}} + {Y_{23}} - {Y_{32}} + {Y_{41}}} \right).
\end{array}
\end{equation}

\noindent From (18), we formulate a split quaternion matrix:

\[X = {Y'_0} + {Y'_1}i + {Y'_2}j + {Y'_3}k = \frac{1}{4}\left( {{I_m}\,\,\,\,i{I_m}\,\,\,j{I_m}\,\,\,k{I_m}} \right)Y'\left( {\begin{array}{*{20}{c}}
{{I_n}}\\
{i{I_n}}\\
{j{I_n}}\\
{ - k{I_n}}
\end{array}} \right).\]

\noindent Clearly ${{\phi }_{X}}=Y'.$ By Theorem 5.1, $X$ is a solution to equation given by (12).
\end{proof}

\end{document}